  \documentclass[11pt,a4paper,twoside]{article}

  \usepackage[margin=3.40cm]{geometry}
  \usepackage{fourier}
  \usepackage{amsmath,amssymb,amsthm,amsfonts}
  \usepackage{bm}
  \usepackage{mathabx}
  \usepackage{xspace,xcolor}
  \usepackage{parskip,fancyhdr,url}
  
  \usepackage{enumitem}
  \usepackage{todonotes}
  \usepackage[
     breaklinks,colorlinks,
     citecolor=blue,linkcolor=blue,urlcolor=teal
  ]{hyperref}
  \usepackage{tikz, tikz-cd}
  \usetikzlibrary{
    arrows, calc, matrix, decorations.pathmorphing, shapes, arrows
    }
  \usepackage{mathtools}

  \newcommand\Rsquigarrow[1]{%
\mathrel{%
\begin{tikzpicture}[baseline= {( $ (current bounding box.south) + (0,-0.5ex) $ )}]
  \node[inner sep=.5ex] (a) {$\scriptstyle #1$};
  \path[draw,implies-,double distance between line centers=1.5pt,decorate,
    decoration={snake,amplitude=0.7pt,segment length=1.2mm,pre=lineto,
    pre   length=4pt}]
    (a.south east) -- (a.south west);
\end{tikzpicture}}%
}

  \begingroup
    \makeatletter
    \@for\theoremstyle:=definition,remark,plain\do{%
      \expandafter\g@addto@macro\csname th@\theoremstyle\endcsname{%
        \addtolength\thm@preskip\parskip
        }%
      }
  \endgroup

  \makeatletter
    \def\tagform@#1{\maketag@@@{%
     \textbf{(\ignorespaces#1\unskip\@@italiccorr)}}}%
     \renewcommand{\eqref}[1]{\textup{\maketag@@@{(\ignorespaces%
          {\ref{#1}}\unskip\@@italiccorr)}}}
  \makeatother

  \newcommand\address[1]{}
  \newcommand\email[1]{}
  \newcommand\dedicatory[1]{}

  \theoremstyle{plain}
  \newtheorem{theorem}[equation]{Theorem}
  \newtheorem{proposition}[equation]{Proposition}
  \newtheorem{corollary}[equation]{Corollary}
  \newtheorem{lemma}[equation]{Lemma}

  \theoremstyle{definition}
  \newtheorem{definition}[equation]{Definition}

  \theoremstyle{axiom}

  \numberwithin{equation}{section}

  \theoremstyle{problem}

  \newcommand{\bsf}[1]{\ensuremath{\bm{\mathnormal{\mathsf{#1}}}}\xspace}

  \newcommand{\Born}{\ensuremath{\bsf{Coarse}\xspace}}
  \newcommand{\CMed}{\ensuremath{\bsf{CMed}\xspace}}

  \DeclareMathOperator{\Obj}{Obj} 
  \DeclareMathOperator{\Mor}{Mor} 

  \DeclareMathOperator{\Tuple}{Tuple} 

  \newcommand{\uc}{u.c.\xspace}

  \newcommand{\uclim}{\mathop{\ooalign{$\lim$\cr
  \hidewidth\raise-1.10ex\hbox{$\leftsquigarrow\mkern-0.5mu$}\cr}}}

  \DeclareMathOperator{\Rips}{Rips}

  \newcommand{\set}[1]{\ensuremath{\left\{ {#1} \right\}}\xspace}

  \newcommand{\st}{\ensuremath{\,\, \colon \,\,}\xspace}
  \newcommand{\from}{\ensuremath{\colon \thinspace}\xspace}
  
  \newcommand{\sqto}{\ensuremath{\rightsquigarrow}\xspace}
  \newcommand{\sqtto}{\ensuremath{\Rsquigarrow{\;\;\;\;}}\xspace}

        \newcommand{\C}{\ensuremath{\mathcal{C}}\xspace}
        
	\newcommand{\X}{\ensuremath{\mathcal{X}}\xspace}

	\newcommand{\calS}{\ensuremath{\mathcal{S}}\xspace}
       \newcommand{\calR}{\ensuremath{\mathcal{R}}\xspace}

  \newcommand{\calN}{\ensuremath{\mathcal{N}}\xspace}

  \newcommand{\sJ}{\mathsf{J}}

  \newcommand{\param}{{\mathchoice{\mkern1mu\mbox{\raise2.2pt\hbox{$
  \centerdot$}}
  \mkern1mu}{\mkern1mu\mbox{\raise2.2pt\hbox{$\centerdot$}}\mkern1mu}{
  \mkern1.5mu\centerdot\mkern1.5mu}{\mkern1.5mu\centerdot\mkern1.5mu}}}

  \fancypagestyle{plain}

  \AtEndDocument{\bigskip{\footnotesize%
  Department of Pure Mathematics, Xi'an Jiaotong--Liverpool University, 111 Ren'ai Road, Suzhou Industrial Park, Suzhou, Jiangsu, 215123, China
  \par \texttt{robert.tang@xjtlu.edu.cn}}}

\begin{document}

    \title{Coarse medians and universal quasigeodesic cones}
    \author{Robert Tang} \date{16 March 2026}

  \maketitle \thispagestyle{empty}

  \begin{abstract}

    We show that any universal quasigeodesic cone of uniformly coarse median spaces admits a canonical coarse median structure. As an application, we recover a result of Bowditch which states that any hierarchically hyperbolic space admits a coarse median structure compatible with the projections to the hyperbolic factor spaces.
  \end{abstract}

  \emph{MSC2020 Classification}: 51F30, 20F65\\
  \emph{Keywords}: coarse median, universal quasigeodesic cone, hierarchically hyperbolic space

  \section{Introduction}

  Coarse median spaces and groups, introduced by Bowditch \cite{Bow13a}, are equipped with a ternary operator which captures useful features of nonpositive curvature. These generalise both Gromov hyperbolic spaces and CAT(0)--cube complexes, and include mapping class groups, Teichm\"uller space, and geometrically finite Kleinian groups among examples \cite{Bow16}. The coarse median viewpoint has been productive, providing a unified framework for questions about Dehn functions, asymptotic cones \cite{Bow13a}, quasi‑isometric rigidity \cite{Bow18}, automorphisms \cite{Fio24}, and analytic properties \cite{Zei16, SW17}.

  A natural coarse median on the mapping class groups arises from the Behrstock--Minsky centroid construction: subsurface projections to curve graphs yield centroids in each hyperbolic factor \cite{MM99,MM00}; these then assemble to a global centroid whose projections coarsely agree with the factorwise centroids \cite{BM11}. In \cite{Bow18}, Bowditch extends this approach to hierarchically hyperbolic spaces (HHS), as developed by Behrstock--Hagen--Sisto \cite{HHS1,HHS2}, showing that coarse medians on uniformly hyperbolic factor spaces induce a coarse median on the total space uniformly compatible with the projection data.

  In this note, we revisit these ``pulling back'' constructions from the perspective of universal quasigeodesic cones, as introduced in a recent paper of the author \cite{Tang-rips}. Informally, these act as universal quasigeodesic receivers over diagrams of metric spaces equipped with uniformly controlled maps between them (see Section \ref{sec:coarse} for the precise definition). We prove that if the factor spaces admit uniform coarse medians so that the given maps are uniformly coarse median preserving, then they induce a compatible coarse median on the universal quasigeodesic cone; see Theorem \ref{thm:uqc-median} for a precise statement.

  \begin{theorem}\label{thm:main}
    Let $D$ be a uniformly controlled diagram of uniformly coarse median spaces $(X_i, \mu_i)$ whose maps are uniformly coarse median preserving. Then any universal quasigeodesic uniformly controlled cone $X$ over $D$ admits a canonical (up to closeness) coarse median $\mu$ such that all legs $(X,\mu) \to (X_i, \mu_i)$ are uniformly coarse median preserving.
  \end{theorem}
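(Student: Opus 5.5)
The natural strategy is to build $\mu$ from the universal property itself. For three points $x,y,z \in X$, I would construct an auxiliary cone over $D$ whose apex is a uniformly controlled space recording the triple and its factorwise medians, and then map it into $X$. The cleanest version uses the cube $X^3$ with the $\ell^1$ (or $\ell^\infty$) metric. The legs $\phi_i \from X \to X_i$ give maps $\psi_i = \mu_i \circ (\phi_i \times \phi_i \times \phi_i) \from X^3 \to X_i$. These are uniformly controlled because the coarse medians $\mu_i$ are uniformly coarsely Lipschitz and the legs are uniformly controlled. They form a uniformly controlled cone over $D$: for a diagram map $f \from X_i \to X_j$ we have $f \circ \psi_i = f\mu_i(\phi_i\times\phi_i\times\phi_i)$, which is uniformly close to $\mu_j(f\phi_i \times f\phi_i\times f\phi_i)$ because $f$ is uniformly coarse median preserving. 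That in turn is uniformly close to $\mu_j(\phi_j\times\phi_j\times\phi_j) = \psi_j$, since $\phi_j$ is uniformly close to $f\phi_i$ and $\mu_j$ is coarsely Lipschitz.

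The cube $X^3$ is not itself quasigeodesic, but I expect the universal property of Section~\ref{sec:coarse} to apply to quasigeodesic (or at least coarsely connected) source cones. Since $X$ is quasigeodesic, $X^3$ with the $\ell^1$ metric is quasigeodesic, so the universal property yields a uniformly controlled map $\mu \from X^3 \to X$, unique up to closeness, with $\phi_i \circ \mu$ uniformly close to $\psi_i$. This is exactly the statement that the legs are uniformly coarse median preserving, and uniqueness up to closeness gives canonicity. If the universal property is only stated for quasigeodesic cones and $X^3$ causes technical trouble, I would instead restrict to the coarse ``graph'' of triples at bounded scale and extend, or apply universality to each triple individually with the one-point space as source and use the uniqueness clause to glue.

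It remains to check Bowditch's axioms for $\mu$. The symmetry and localisation axioms ($\mu(x,x,y)\approx x$ and invariance under permutations) follow from uniqueness. For instance, both $(x,y)\mapsto\mu(x,x,y)$ and the projection $(x,y)\mapsto x$ induce cones on $X^2$ that are uniformly close to each other, since $\mu_i(a,a,b)\approx a$ uniformly. Uniqueness then forces the two maps $X^2\to X$ to be close, and the same argument handles permutations.

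The main obstacle is the finite-approximation axiom: for each finite $A\subset X$ of size at most $p$ we need a CAT(0) cube complex, or median algebra, $\Pi$ together with maps $\pi\from A\to\Pi$ and $\lambda\from\Pi\to X$ satisfying the required approximations with uniform constants. Here I would use the equivalent characterisation via coarse median identities (Niblo--Wright--Zhang: a coarse median that coarsely satisfies the 5-point condition / median identities, with bounded-rank-independent formulation when rank isn't required). Each factor satisfies these identities uniformly, and the legs intertwine $\mu$ with $\mu_i$. To transport an identity, I would observe that both sides of a median identity define uniformly controlled maps $X^n\to X$ whose compositions with each leg are uniformly close in $X_i$, by induction on the term depth using coarse median preservation. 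Uniqueness in the universal property then makes the two sides close in $X$. I expect the delicate point to be uniformity: uniqueness must come with a closeness bound depending only on the control data of the cone, and I would extract this bound from the explicit form of the universal property in Section~\ref{sec:coarse}.
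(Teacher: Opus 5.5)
Your argument is correct, but it takes a genuinely different route from the paper.

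\textbf{The paper's route.} The paper never uses the universal property abstractly to define the median. It invokes the Rips--tuple recipe (Theorem \ref{thm:rips-tuple}) to model the cone as $\Rips_\sigma \Tuple^\kappa \hat M$. It then shows that the stable tuplespace is an approximate median subalgebra of $\prod_j M_j$: because the bonding maps are CMP, the factorwise median of consistent tuples is again consistent with a slightly larger constant, and stabilisation pulls it back. Next it lifts the induced median to the Rips graph (Lemma \ref{lem:Rips-median}). Finally it transports this median to $X$ along the quasi-isometry $f$ via Lemma \ref{lem:ce-median}.

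\textbf{Your route.} You define $\mu$ as the factorisation of the cone $\mu_i\circ\phi_i^{\times 3}\colon X^3 \sqtto D$. You then check the Niblo--Wright--Zhang axioms of Definition \ref{def:median} using the uniqueness clause. You apply it to pairs of coarsely Lipschitz maps $F,G\colon X^n \to X$ with $n \leq 4$ whose compositions with the legs are uniformly close; this uses CMP of the legs, the common upper control of the $\mu_i$, and the factorwise identity.

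\textbf{Comparison.} Yours is a purely formal, Yoneda-style argument. It needs only the universal property and the fact that $X^n$ is quasigeodesic. It also gives uniqueness directly: any coarse median making the legs uniformly CMP is controlled, hence coarsely Lipschitz on the quasigeodesic space $X^3$, hence close to $\mu$. The same trick applied to $f\eta$ versus $\mu f^{\times 3}$ on $Z^3$ shows that $\lambda$ is universal among quasigeodesic coarse median cones, as in Theorem \ref{thm:uqc-median}. The paper's route buys an explicit description of $\mu$ as the product median on the tuplespace, and the parallel non-quasigeodesic statement (Theorem \ref{thm:univ-cmed-cone}).

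\textbf{Unnecessary hesitations.}
\begin{itemize}
\item $X^3$ \emph{is} quasigeodesic. A product of graphs with the $\ell^1$ metric is the graph metric of the product graph, and $\ell^1$ and $\ell^\infty$ are bi-Lipschitz equivalent. So no fallback is needed, and the one-point-source fallback would not by itself give uniform constants or a controlled map.
\item There is no delicate uniformity issue. Each axiom is a single closeness statement $F \approx G$ between two maps defined on all of $X^n$. Uniqueness therefore already delivers one constant valid for all inputs, and nothing needs to be extracted from an explicit model.
\item The $\mu_i$ are only assumed to share an upper control, not to be coarsely Lipschitz. Controlledness is all you actually use, so this does not affect the argument.
\end{itemize}
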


  One main result from \cite{Tang-rips} is that any HHS realises a universal quasigeodesic cone over its diagram of hyperbolic factor spaces together with the usual pairwise constraint spaces. Theorem \ref{thm:main} applies to such diagrams and so we recover Bowditch’s coarse median theorem for HHSs (see Section \ref{sec:BCII}). More generally, this applies to families of uniformly coarse median spaces equipped with pairwise constraints either imposing no constraint or a bounded coarse interval image property. Theorem \ref{thm:main} further gives uniqueness of the coarse median on an HHS (up to closeness) compatible with given HHS--data; we remark that without prescribing HHS--data an HHS may admit (possibly uncountably) many distinct coarse medians \cite{Man-short}, though uniqueness does hold in some settings \cite{FS-unique}.

  The proof of Theorem \ref{thm:main} is constructive: by our earlier work \cite{Tang-rips}, a universal quasigeodesic cone can be realised using an explicit Rips--tuple space associated to the data; the desired coarse median is then induced by the product median. This highlights the utility of these tools for constructively promoting factorwise structures to global ones.

  \subsection*{Acknowledgements}

  The author thanks Mark Hagen, Robert Kropholler, Harry Petyt, Alessandro Sisto, Davide Spriano, and Jing Tao for interesting conversations.
  The author acknowledges support from the National Natural Science Foundation of China (NSFC 12101503); the Suzhou Science and Technology Development Planning Programme (ZXL2022473); and the XJTLU Research Development Fund (RDF-23-01-121).

  \section{Coarse geometry}\label{sec:coarse}

  Let $X$ and $Y$ be (extended) metric spaces. An \emph{upper control} for a map $f \from X \to Y$ is a proper increasing function $\rho \from [0,\infty) \to [0,\infty)$ such that such that $d_X(x,x') \leq t$ implies $d_Y(fx,fx') \leq \rho(t)$ for all $x,x' \in X$ and $t \geq 0$. We say $f$ is \emph{controlled} if it admits an upper control, and \emph{coarsely Lipschitz} if it admits an affine upper control. For any subset $U \subseteq Y$, let $\calN_r(U)$ denote its closed metric $r$--neighbourhood in $Y$. We call $f$ \emph{coarsely surjective} if $\calN_r f(X) = Y$ for some $r \geq 0$. For $\kappa \geq 0$ and points $y,y' \in Y$, write $y \approx_\kappa y'$ to mean $d_Y(y,y') \leq \kappa$. Say two maps $f,g \from X \to Y$ are \emph{$\kappa$--close}, denoted as $f \approx_\kappa g$, if $f(x) \approx_\kappa g(x)$ for all $x \in X$. We say $f,g$ are \emph{close}, and denote this by $f \approx g$, if $f \approx_\kappa g$ for some $\kappa \geq 0$. Note that closeness depends only on the codomain metric. A controlled map $f \from X \to Y$ is a \emph{coarse equivalence} if there exists a controlled map $g \from Y \to X$ such that $gf \approx 1_X$ and $fg \approx 1_Y$; such a map $g$ is called a \emph{coarse inverse} for $f$, and is unique up to closeness. If, in addition, $f$ and $g$ are coarsely Lipschitz then we call $f$ a \emph{quasi-isometry}.

  \subsection{Uniformly controlled cones}\label{sec:uccone}

  We recall the notions of uniformly controlled diagrams and cones from \cite{Tang-rips}. Let $\Born$ be the category with extended metric spaces asobjects, and controlled maps as morphisms. Let $\sJ$ be a directed multigraph, with vertex set $\Obj(\sJ)$ and edge set $\Mor(\sJ)$. A \emph{uniformly controlled (\uc) diagram} $D \from \sJ \sqto \Born$ (of shape $\sJ$)   comprises:
  \begin{itemize}
   \item a metric space $D_j$ for each $j \in \Obj(\sJ)$, and
   \item a controlled map $D_\phi \from D_i \to D_j$ for each arrow $\phi \from i \to j$ in $\Mor(\sJ)$,
  \end{itemize}
  such that all $D_\phi$ admit a common upper control.

  The ``squiggly arrow'' notation is intended to suggest an analogy with diagrams in the usual categorical sense, but with uniform control and error involved. We do not require existence of compositions or coarse commutativity for \uc~diagrams.

  Let $W$ be a metric space. A \emph{uniformly controlled cone} $\lambda \from W \sqtto D$ over $D$ with \emph{apex} $W$   comprises a uniformly controlled family of \emph{legs} $\lambda_j \from W \to D_j$ for $j \in \Obj(\sJ)$ such that
  \begin{center}
    \tikzcdset{arrow style=math font}
    \begin{tikzcd}[column sep=large]
    W \arrow[d, "\lambda_i"'] \arrow[dr, "\lambda_j"] & \\
    D_i \arrow[r, "D_\phi"'] & D_j
    \end{tikzcd}
    \end{center}
  uniformly coarsely commutes for all arrows $\phi \from i \to j$.   In other words, there exists $\kappa \geq 0$ such that $\lambda_j \approx_\kappa D_\phi \lambda_i$ for all arrows $\phi$. Two \uc~cones $\lambda, \lambda' \from W \sqtto D$ are \emph{close}, denoted as $\lambda \approx \lambda'$, if their legs are uniformly close. We call $\lambda \from W \sqtto D$ a \emph{\uc~limit cone} over $D$ if for any \uc~cone $\zeta \from Z \sqtto D$, there exists a unique (up to closeness) controlled map $f \from Z \to W$ such that $\zeta \approx \lambda f$. In other words, the diagram
    \begin{center}
    \tikzcdset{arrow style=math font}
    \begin{tikzcd}[column sep=large]
    Z \arrow[r, "f", dashed] \arrow[dr, "\zeta_j"'] & W \arrow[d, "\lambda_j"]   \\
    & D_j
    \end{tikzcd}
    \end{center}
  uniformly coarsely commutes for all legs. Any \uc~limit cone over $D$, if it exists, is unique up to coarse equivalence (unique up to closeness); we shall denote it by $\uclim_\sJ D$.

    For $\kappa \geq 0$, the \emph{$\kappa$--consistent tuplespace} of $D$ is
    \[\Tuple^\kappa(D) := \set{(x_j)_j \in \prod_j D_j \st x_j \approx_\kappa D_\phi x_j \textrm{ for all arrows } \phi} \subseteq \prod_j D_j\]
  equipped with the induced $\ell^\infty$--metric. The 1--Lipschitz projections $\pi^\kappa_j \from \Tuple^\kappa D \to D_j$ form the legs of a \uc~cone $\pi^\kappa \from \Tuple^\kappa(D) \sqtto D$.  The following is immediate.

   \begin{lemma}\label{lem:uc-factor}
   Let $\zeta \from Z \sqtto D$ be a \uc~cone. Then for all $\kappa$ large, $\zeta$ factors as $\zeta = \pi^\kappa f^\kappa$ where $f^\kappa \from Z \to \Tuple^\kappa D \subseteq \prod_j D_j$ is induced by $\prod_j \zeta_j$. \qed
  \end{lemma}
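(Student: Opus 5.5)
Note first that the definition of $\Tuple^\kappa(D)$ contains an evident typo: the consistency condition should read $x_j \approx_\kappa D_\phi x_i$ for each arrow $\phi \from i \to j$. With that reading, the plan is to take $f^\kappa$ to be the map $z \mapsto (\zeta_j(z))_j$ and check three things: that it lands in $\Tuple^\kappa(D)$, that it is controlled, and that it satisfies $\pi^\kappa f^\kappa = \zeta$.

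Since $\zeta$ is a \uc~cone, there is a constant $\kappa_0 \geq 0$ with $\zeta_j \approx_{\kappa_0} D_\phi \zeta_i$ for every arrow $\phi \from i \to j$.

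\emph{Membership.} Fix $\kappa \geq \kappa_0$ and $z \in Z$. The tuple $(\zeta_j(z))_j$ satisfies $d_{D_j}(\zeta_j(z), D_\phi \zeta_i(z)) \leq \kappa_0 \leq \kappa$ for every arrow $\phi \from i \to j$. Hence $f^\kappa(z) \in \Tuple^\kappa(D)$. This is the only place the threshold on $\kappa$ enters, and it is why the statement says ``for all $\kappa$ large''.

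\emph{Control.} The legs of $\zeta$ are uniformly controlled, so they share a common upper control $\rho$. If $d_Z(z,z') \leq t$, then $d_{D_j}(\zeta_j z, \zeta_j z') \leq \rho(t)$ for every $j$. Taking the supremum over $j$ gives
\[ d_{\ell^\infty}(f^\kappa z, f^\kappa z') \leq \rho(t). \]
So $\rho$ is also an upper control for $f^\kappa$.

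The uniformity of the upper control is the one step needing care. It is exactly what makes the $\ell^\infty$ distance finite and controlled, and it is built into the definition of a \uc~cone. Extended metrics also cause no trouble: $\Tuple^\kappa(D)$ inherits the possibly infinite $\ell^\infty$ metric, and the estimate above only involves pairs of points at finite distance $t$ in $Z$.

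\emph{Factorisation.} Finally, $\pi^\kappa_j f^\kappa(z) = \zeta_j(z)$ holds on the nose for every $j$ and every $z$. So $\zeta = \pi^\kappa f^\kappa$ exactly, not merely up to closeness, which proves the lemma.
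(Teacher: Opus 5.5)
Your proof is correct and matches the paper's intended argument (the paper calls the lemma immediate and gives no proof). The cone's coarse-commutativity constant gives membership in $\Tuple^\kappa(D)$ for large $\kappa$, the common upper control of the legs controls $f^\kappa$ in the $\ell^\infty$ metric, and $\pi^\kappa f^\kappa = \zeta$ holds exactly; you are also right that the defining condition should read $x_j \approx_\kappa D_\phi x_i$ for $\phi \from i \to j$.
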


  The inclusions $\Tuple^\kappa(D) \hookrightarrow \Tuple^{\kappa'}(D)$ are isometric embedding for all $\kappa \leq \kappa'$; these give rise to a filtration $\Tuple^*(D)$. Say that $\Tuple^*(D)$ \emph{stabilises} if these inclusions are coarsely surjective for all $\kappa \leq \kappa'$ large.

  \begin{proposition}[Stable tuplespaces {\cite[Proposition 5.17]{Tang-rips}}]\label{prop:stable}
   Let $D \from \sJ \sqto \Born$ be a \uc~diagram. Then $D$ admits a \uc~limit cone if and only if $\Tuple^*(D)$ stabilises. In that case, $\pi^\kappa \from \Tuple^\kappa(D) \sqtto D$ realises the \uc~limit cone over $D$ for $\kappa$ large. \qed
  \end{proposition}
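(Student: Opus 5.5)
We prove the two implications separately. Throughout we use the following observation: a map into $\Tuple^\kappa(D)\subseteq\prod_j D_j$ carries the $\ell^\infty$--metric, so two maps $h,h'\from Z\to\Tuple^\kappa(D)$ are $c$--close exactly when $\pi^\kappa_j h\approx_c \pi^\kappa_j h'$ for every $j$ with one constant $c$. This uniformity in $j$ is what makes everything work. First we check that $\pi^\kappa\from\Tuple^\kappa(D)\sqtto D$ is a \uc~cone. The legs are $1$--Lipschitz, hence uniformly controlled. They coarsely commute because $x_j\approx_\kappa D_\phi x_i$ for every arrow $\phi\from i\to j$, directly from the definition of $\Tuple^\kappa(D)$.

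\emph{Stabilisation implies a limit cone.} Suppose the inclusions $\iota\from\Tuple^\kappa(D)\hookrightarrow\Tuple^{\kappa'}(D)$ are coarsely surjective for all $\kappa_0\le\kappa\le\kappa'$. Fix $\kappa\ge\kappa_0$ and a \uc~cone $\zeta\from Z\sqtto D$.
\begin{enumerate}
\item By Lemma~\ref{lem:uc-factor}, choose $\kappa'\ge\kappa$ so that $\zeta=\pi^{\kappa'}f^{\kappa'}$.
\item Since $\iota$ is an isometric embedding with $\calN_c\,\iota(\Tuple^\kappa D)=\Tuple^{\kappa'}D$ for some $c$, any choice of nearest points gives a retraction $r\from\Tuple^{\kappa'}(D)\to\Tuple^\kappa(D)$ with $\iota r\approx_c 1$. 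This $r$ is controlled, with upper control $t\mapsto t+2c$, so it is a coarse inverse of $\iota$.
\item Set $f:=r f^{\kappa'}$. It is controlled, and $\pi^\kappa f=\pi^{\kappa'}\iota r f^{\kappa'}\approx_c\pi^{\kappa'}f^{\kappa'}=\zeta$, because the projections are $1$--Lipschitz.
\end{enumerate}
For uniqueness, suppose $\pi^\kappa h\approx\zeta\approx\pi^\kappa h'$ with uniform constants. Then all coordinates of $h$ and $h'$ are uniformly close, so $h\approx h'$ by the $\ell^\infty$ observation. Hence $\pi^\kappa$ is the \uc~limit cone for every $\kappa\ge\kappa_0$.

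\emph{A limit cone implies stabilisation.} Let $\lambda\from W\sqtto D$ be a \uc~limit cone.
\begin{enumerate}
\item By Lemma~\ref{lem:uc-factor}, there is $\kappa_0$ with $\lambda=\pi^{\kappa_0}f$, where $f\from W\to\Tuple^{\kappa_0}(D)$ is induced by $\prod_j\lambda_j$.
\item Fix $\kappa'\ge\kappa\ge\kappa_0$. Since $\pi^{\kappa'}$ is a \uc~cone, the universal property gives a controlled $g\from\Tuple^{\kappa'}(D)\to W$ with $\lambda g\approx_\epsilon\pi^{\kappa'}$ for some $\epsilon$.
\item For $x=(x_j)\in\Tuple^{\kappa'}(D)$, the point $fg(x)\in\Tuple^{\kappa_0}(D)\subseteq\Tuple^\kappa(D)$ has $j$--th coordinate $\lambda_j g(x)\approx_\epsilon x_j$ for every $j$.
\item Therefore $d_\infty(fg(x),x)\le\epsilon$. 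So even $\Tuple^{\kappa_0}(D)$ is $\epsilon$--dense in $\Tuple^{\kappa'}(D)$, and hence so is $\Tuple^\kappa(D)$.
\end{enumerate}
This shows $\Tuple^*(D)$ stabilises.

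\emph{Main obstacle.} The delicate point is this second direction. One must make sure the closeness constant $\epsilon$ from the universal property is uniform over all legs $j$, since otherwise it would not give an $\ell^\infty$ bound. This uniformity is exactly what closeness of \uc~cones means, namely that the legs are uniformly close. The constant $\epsilon$ may depend on $\kappa'$, but that is harmless because coarse surjectivity is only required for each pair $\kappa\le\kappa'$ separately.
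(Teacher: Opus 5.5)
Your proof is correct and follows what is essentially the standard argument for this result, which the paper imports from its earlier work without proof. For existence you use Lemma~\ref{lem:uc-factor} together with a coarse inverse of the inclusion $\Tuple^{\kappa}(D)\hookrightarrow\Tuple^{\kappa'}(D)$, uniqueness comes from the $\ell^\infty$--metric, and for the converse you factor $\pi^{\kappa'}$ through the limit cone to see that $\Tuple^{\kappa_0}(D)$ is $\epsilon$--dense in $\Tuple^{\kappa'}(D)$. The only cosmetic fix is that nearest points need not exist, so define $r$ by choosing any point of $\Tuple^{\kappa}(D)$ within distance $c+1$ (say); this changes nothing in the argument.
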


  \subsection{Universal quasigeodesic cones}\label{sec:uqc}

  We now turn our attention to \emph{quasigeodesic} \uc~cones, that is, where the apex is quasigeodesic. Recall that a metric space is \emph{quasigeodesic} (resp.~\emph{coarsely geodesic}) if it is quasi-isometric (resp.~coarsely equivalent) to a graph with the standard graph (i.e.~combinatorial) metric; this is equivalent to the usual definition in terms of quasi- (resp.~coarse) geodesics \cite[Proposition 3.20]{Tang-rips}. Any controlled map with quasigeodesic domain is necessarily coarsely Lipschitz. In particular, the legs of any quasigeodesic \uc~cone are uniformly coarsely Lipschitz. Say that $\lambda \from W \sqtto D$ is a \emph{universal quasigeodesic \uc~cone} if every quasigeodesic \uc~cone $\zeta \from Z \sqtto D$ factors (up to closeness) through $\lambda$ via a unique (up to closeness) coarsely Lipschitz map. Any such cone is unique up to quasi-isometry.

  To compute universal quasigeodesic \uc~cones, we employ the Rips filtration.   Given a metric space $X$, the \emph{Rips graph} $\Rips_\sigma X$ at scale $\sigma \geq 0$ is the graph with vertex set $X$, with points $x,x'\in X$ declared adjacent if and only if $d_X(x,x') \leq \sigma$. We shall regard $\Rips_\sigma X$ as a metric space with underlying set $X$, where the metric is the restriction of the standard graph metric. The underlying identity on $X$ induces a
  \begin{itemize}
   \item Lipschitz map $\xi_\sigma \from \Rips_\sigma X \to X$ for all $\sigma \geq 0$, and
   \item a 1--Lipschitz map $\Rips_\sigma X \to \Rips_\tau X$ for all $\sigma \leq \tau$.
  \end{itemize}
  By considering the Rips graph at all scales, we obtain the \emph{(metric) Rips filtration $\Rips_*X$}. Say that $\Rips_*X$ stabilises if the maps $\Rips_\sigma X \to \Rips_\tau X$ are coarse equivalences (or, equivalently, quasi-isometries) for all $\sigma \leq \tau$ large. The next two results are standard.

  \begin{lemma}[{\cite[Lemma 3.22]{Tang-rips}}]
  \label{lem:rips-stable}
   A metric space $X$ is coarsely geodesic if and only if $\Rips_*X$ stabilises; in which case $\xi_\sigma \from \Rips_\sigma X \to X$ is a coarse equivalence for $\sigma$ large. \qed
   \end{lemma}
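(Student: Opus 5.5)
We prove the two directions separately, after a small reduction. Write $\tau_{\sigma} \from \Rips_\sigma X \to \Rips_\tau X$ for the 1--Lipschitz identity maps, $\sigma \le \tau$. Since coarse equivalence is invariant under passing to equivalent metrics up to control, we use the criterion that a space is coarsely geodesic exactly when it is coarsely equivalent to a graph. The key observation is that, with $\sigma$ fixed, $\Rips_\sigma X$ is a graph (up to the possibly infinite distances between components, which is allowed for extended metrics).

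\emph{($\Leftarrow$).} Suppose $\Rips_*X$ stabilises. Fix $\sigma$ in the stable range; we show $\xi_\sigma$ is a coarse equivalence. Its candidate inverse is the identity $X \to \Rips_\sigma X$. Closeness of the compositions to the identities is trivial, since they are literally identities. So the only thing to check is that this identity map is controlled: we need a proper function $\rho$ with $d_X(x,x') \le t \Rightarrow d_{\Rips_\sigma}(x,x') \le \rho(t)$.

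For $t \le \sigma$ we may take $\rho(t) = 1$. For larger $t$, note that $x,x'$ are adjacent in $\Rips_t X$, so $d_{\Rips_t}(x,x') \le 1$. The map $\Rips_\sigma X \to \Rips_t X$ is a coarse equivalence by stabilisation, and its coarse inverse is close to the identity, since the map is the identity on the underlying set and closeness is measured in $\Rips_\sigma$. Hence $d_{\Rips_\sigma}(x,x') \le \rho_t(1)$ for some control $\rho_t$ of that inverse. Setting $\rho(t) := \sup_{s \le t} \rho_s(1)$ over a monotone choice gives an upper control; finiteness at each $t$ comes from stabilisation at scale $\lceil t\rceil$. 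Thus $\xi_\sigma$ is a coarse equivalence, and $X$ is coarsely equivalent to the graph $\Rips_\sigma X$, hence coarsely geodesic.

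\emph{($\Rightarrow$).} Suppose $f \from X \to G$ is a coarse equivalence to a graph, with coarse inverse $g$, controls $\rho_f, \rho_g$, and $gf \approx_C 1_X$. Take $\sigma \ge \max(\rho_g(1), 2C + \rho_g(0))$, or similar. Given $d_X(x,x') \le t$, join $f x$ to $f x'$ by an edge path of length at most $\rho_f(t)$. Apply $g$ to the path: consecutive images lie within $\rho_g(1)$ of each other. The endpoints $g f x$ and $g f x'$ lie within $C$ of $x$ and $x'$ respectively. This yields a $\sigma$--chain from $x$ to $x'$ of length at most $\rho_f(t) + 2$. Hence the identity $X \to \Rips_\sigma X$ is controlled, with a control independent of any larger scale $\tau \ge \sigma$. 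Consequently each map $\Rips_\sigma X \to \Rips_\tau X$ has controlled inverse (the identity), so it is a coarse equivalence for all $\tau \ge \sigma \ge \sigma_0$. Therefore $\Rips_*X$ stabilises, and the same argument shows $\xi_\sigma$ is a coarse equivalence.

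The main obstacle is the chain-building step in ($\Rightarrow$): the scale $\sigma_0$ must be chosen uniformly from the data $\rho_g$ and $C$, so that a single $\sigma$ works for all $t$. The ($\Leftarrow$) bookkeeping is routine once one notes that coarse inverses of identity maps can be taken to be identities.
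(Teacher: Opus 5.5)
Your proof is correct. This paper gives no proof of the lemma to compare against: it quotes it from \cite[Lemma 3.22]{Tang-rips} as a standard fact. Your argument is the standard one. For $(\Rightarrow)$ you push an edge path in the graph back through the coarse inverse to get $\sigma$--chains of controlled length. For $(\Leftarrow)$ you use the coarse inverse of $\Rips_\sigma X \to \Rips_t X$, which is close to the identity, to control the identity $X \to \Rips_\sigma X$.

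Three small points are worth tidying.
\begin{itemize}
\item In $(\Rightarrow)$, say explicitly why the identity $\Rips_\tau X \to \Rips_\sigma X$ is controlled: it factors as the $\tau$--Lipschitz map $\xi_\tau$ followed by the controlled identity $X \to \Rips_\sigma X$. Its control therefore depends on $\tau$, which is harmless because stabilisation asks for no uniformity in $\tau$.
\item In $(\Leftarrow)$, the cleanest choice is $\rho(t) := \sup\{d_{\Rips_\sigma}(x,x') : d_X(x,x') \leq t\}$. This is automatically increasing and is finite by your argument at scale $\max(t,\sigma)$, so no ceiling or monotone choice of inverses is needed. Add $t$ if necessary to make it proper.
\item In $(\Rightarrow)$, the threshold $\sigma \geq \max(C, \rho_g(1))$ already suffices.
\end{itemize}
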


  \begin{proposition}[{\cite[Lemma 3.3 and Proposition 3.20]{Tang-rips}}]
  \label{prop:qg-rips}
    Let $X$ be a quasigeodesic space and $f \from X \to Y$ be a controlled map. Then for $\sigma \geq 0$ large, the map $f_\sigma \from X \to \Rips_\sigma Y$ induced by $f$ is controlled, hence coarsely Lipschitz. \qed
    \end{proposition}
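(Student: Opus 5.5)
Both statements are fairly standard facts about Rips graphs, so the proof will be a direct unpacking of the definitions.

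\textbf{Proposition \ref{prop:qg-rips}.} Since $X$ is quasigeodesic, fix a graph $\Gamma$ and a quasi-isometry $q \from \Gamma \to X$ with a coarse inverse $p \from X \to \Gamma$. Let $\rho$ be an upper control for $f$. Then adjacent vertices $v,w$ of $\Gamma$ satisfy $d_X(qv,qw) \leq C$ for some constant $C$, hence
\[
d_Y(fqv,fqw) \leq \rho(C).
\]
Take any $\sigma \geq \rho(C)$. Then $fq$ sends edges of $\Gamma$ to edges (or single points) of $\Rips_\sigma Y$. Consequently $fq \from \Gamma \to \Rips_\sigma Y$ is $1$--Lipschitz with respect to the graph metrics, since a path of length $n$ in $\Gamma$ maps to a path of length at most $n$.

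Now let $x \in X$. We have $qp(x) \approx_K x$ for a uniform $K$, so $d_Y(f x, f q p x) \leq \rho(K)$. After enlarging $\sigma$ so that $\sigma \geq \rho(K)$ as well, this gives
\[
d_{\Rips_\sigma Y}(fx, fqpx) \leq 1.
\]
Write $f_\sigma$ for $f$ viewed as a map $X \to \Rips_\sigma Y$. Then $f_\sigma \approx_1 (fq)\circ p$. Here $p$ is coarsely Lipschitz and $fq$ is $1$--Lipschitz, so the composite $(fq)\circ p$ is coarsely Lipschitz. A map within distance $1$ of a coarsely Lipschitz map is itself coarsely Lipschitz, hence controlled.

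\textbf{Lemma \ref{lem:rips-stable}.}

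\emph{Backward direction.} Suppose $\Rips_*X$ stabilises. Then $\xi_\sigma$ is $\sigma$--Lipschitz for every $\sigma$. Stabilisation means that for $\sigma$ large, $d_X(x,x') \leq \tau$ forces
\[
d_{\Rips_\sigma}(x,x') \leq \theta(\tau)
\]
for some proper function $\theta$. Hence the identity $X \to \Rips_\sigma X$ is controlled, and it is a coarse inverse of $\xi_\sigma$. So $X$ is coarsely equivalent to the graph $\Rips_\sigma X$, i.e.\ $X$ is coarsely geodesic.

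\emph{Forward direction.} Suppose $X$ is coarsely geodesic. Then $X$ has coarse paths: there are $\sigma_0$ and a proper function $\theta$ such that any two points at distance at most $\tau$ are joined by a $\sigma_0$--chain with at most $\theta(\tau)$ steps. For $\sigma \geq \sigma_0$ this says exactly that the identity $X \to \Rips_\sigma X$ is controlled. Therefore $\xi_\sigma$ is a coarse equivalence, and so are the comparison maps $\Rips_\sigma X \to \Rips_\tau X$.

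\textbf{Main obstacle.} The only real work is extracting the chain property from an abstract coarse equivalence $X \simeq \Gamma$. To do this, pull back geodesic edge paths in $\Gamma$ through the coarse inverse. Consecutive vertices of such a path map to points at uniformly bounded distance in $X$. The endpoints of the pulled-back path lie uniformly close to the original two points, which supplies both the step size $\sigma_0$ and the step count bound $\theta$.
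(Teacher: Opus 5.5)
Your proof is correct, and it is the standard argument behind the results the paper cites for this statement. You factor $f$ up to closeness through a graph model $\Gamma$ of $X$, on which $fq$ becomes $1$--Lipschitz into $\Rips_\sigma Y$ once $\sigma \geq \rho(C)$, and you absorb the error $qp \approx_K 1_X$ by also taking $\sigma \geq \rho(K)$. One could equally take $\Gamma = \Rips_s X$ for $s$ large, in which case $f_\sigma$ is literally a composite and no closeness correction is needed. Your extra argument for Lemma \ref{lem:rips-stable} is fine but is not needed for this statement.
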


  By applying the Rips filtration to the coarse tuplespaces, we obtain an explicit method for computing universal quasigeodesic \uc~cones.

  \begin{theorem}[Rips--tuple recipe {\cite[Theorem 5.19]{Tang-rips}}]\label{thm:rips-tuple}
   Let $D \from \sJ \sqto \Born$ be a \uc~diagram. The following are equivalent.
   \begin{enumerate}
    \item $D$ admits a universal quasigeodesic \uc~cone,
    \item $\pi^\kappa \xi_\sigma \from \Rips_\sigma \Tuple^\kappa D \sqtto D$ is a universal quasigeodesic cone for some $\sigma, \kappa \geq 0$,
    \item $\Tuple_* D$ stabilises to a coarsely geodesic space. \qed        \end{enumerate}
  \end{theorem}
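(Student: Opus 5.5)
Throughout, I read condition (3) as follows: the doubly indexed filtration $R_{\sigma,\kappa} := \Rips_\sigma \Tuple^\kappa D$ stabilises. That is, for all $(\sigma,\kappa) \leq (\sigma',\kappa')$ sufficiently large, the maps $\iota \from R_{\sigma,\kappa} \to R_{\sigma',\kappa'}$ induced by the identity are quasi-isometries. I will prove (3)$\Rightarrow$(2)$\Rightarrow$(1)$\Rightarrow$(3).

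Two facts are used repeatedly.
\begin{enumerate}
\item Each $R_{\sigma,\kappa}$ is a graph, hence quasigeodesic. Moreover $\pi^\kappa\xi_\sigma$ is a \uc~cone, since its legs are uniformly Lipschitz and $\kappa$--consistent.
\item (Key observation.) If two tuples in $\Tuple^\kappa D$ are $c$--close in every coordinate, then they are at $\ell^\infty$--distance at most $c$. Hence they are adjacent in $\Rips_{\sigma'}\Tuple^{\kappa'} D$ for any $\sigma' \geq c$ and $\kappa' \geq \kappa$.
\end{enumerate}

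\textbf{(3)$\Rightarrow$(2).} Fix $(\sigma,\kappa)$ in the stable range and put $W = R_{\sigma,\kappa}$ and $\lambda = \pi^\kappa\xi_\sigma$.

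\emph{Existence of factorisations.} Let $\zeta \from Z \sqtto D$ be a quasigeodesic \uc~cone. By Lemma \ref{lem:uc-factor}, $\zeta = \pi^{\kappa'} f^{\kappa'}$ for $\kappa'$ large. By Proposition \ref{prop:qg-rips}, $f^{\kappa'}$ induces a coarsely Lipschitz map $Z \to R_{\sigma',\kappa'}$ for $\sigma'$ large. Composing with a coarse inverse of $\iota \from W \to R_{\sigma',\kappa'}$ gives a coarsely Lipschitz $h \from Z \to W$ with $\lambda h \approx \zeta$. This uses that legs of $R_{\sigma',\kappa'}$ restrict to those of $W$, and that the coarse inverse is close to the identity on the underlying tuples.

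\emph{Uniqueness.} Suppose $\lambda h \approx_c \lambda h'$. Then $h(z)$ and $h'(z)$ are coordinatewise $c$--close, so by the key observation they are adjacent in $R_{\sigma',\kappa'}$ with $\sigma' \geq c$. Thus $\iota h \approx_1 \iota h'$, and since $\iota$ is a quasi-isometry, $h \approx h'$.

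\textbf{(2)$\Rightarrow$(1)} is immediate.

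\textbf{(1)$\Rightarrow$(3).} Let $\lambda \from W \sqtto D$ be universal quasigeodesic.

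\emph{Maps in both directions.} By Lemma \ref{lem:uc-factor} and Proposition \ref{prop:qg-rips}, for all $(\sigma,\kappa)$ large, $\lambda$ lifts to coarsely Lipschitz maps $F_{\sigma,\kappa} \from W \to R_{\sigma,\kappa}$. These satisfy $\pi^\kappa\xi_\sigma F_{\sigma,\kappa} = \lambda$ and are compatible with the transition maps, $F_{\sigma',\kappa'} = \iota F_{\sigma,\kappa}$. Conversely, by the first fact and universality, there are coarsely Lipschitz maps $g_{\sigma,\kappa} \from R_{\sigma,\kappa} \to W$ with $\lambda g_{\sigma,\kappa} \approx \pi^\kappa\xi_\sigma$.

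\emph{One composite is the identity.} $\lambda g F = \lambda$, so uniqueness in the universal property gives $g_{\sigma,\kappa}F_{\sigma,\kappa} \approx 1_W$.

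\emph{The other composite is close to the transition map.} Write $c = c(\sigma,\kappa)$ for the closeness constant in $\lambda g_{\sigma,\kappa} \approx \pi^\kappa\xi_\sigma$. For $x \in R_{\sigma,\kappa}$, the tuples $x$ and $F g x$ are coordinatewise $c$--close. By the key observation, $\iota \approx_1 F_{\sigma',\kappa'} g_{\sigma,\kappa}$ as maps $R_{\sigma,\kappa} \to R_{\sigma',\kappa'}$ whenever $\sigma' \geq c$.

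\emph{Deducing stabilisation.} Uniqueness also gives $g_{\sigma',\kappa'}\iota \approx g_{\sigma,\kappa}$. From these relations I aim to show that $F_{\sigma',\kappa'}$ and $g_{\sigma',\kappa'}$ are mutually coarse inverse quasi-isometries. It would then follow that every transition map between sufficiently large parameters is a quasi-isometry, being close to a composite $F g$ of quasi-isometries.

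\textbf{Main obstacle.} The difficulty is the circularity in the constants: $c(\sigma,\kappa)$ may exceed $\sigma$. So I cannot directly conclude $F_{\sigma,\kappa}g_{\sigma,\kappa} \approx 1$ on $R_{\sigma,\kappa}$ itself, only after passing to a larger scale $\sigma'$. Knowing $\iota \approx F g$, the lower bound for $\iota$ follows from $g_{\sigma',\kappa'}\iota \approx g_{\sigma,\kappa}$ once $g_{\sigma,\kappa}$ is known to be coarsely injective.

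To close the loop, I would first fix a base scale $(\sigma_0,\kappa_0)$ and choose $\sigma_1 \geq c(\sigma_0,\kappa_0)$. Next, I would show $F_{\sigma_1,\kappa_0}$ is coarsely surjective: every point of $R_{\sigma_1,\kappa_0}$ lies within a bounded number of Rips steps of the image of $R_{\sigma_0,\kappa_0}$. For this I would use connectivity of the graph $R_{\sigma_1,\kappa_0}$ together with the observation that each $\sigma_1$--edge can be traced through $W$ via $g$ and $F$. Finally, I would bootstrap from this base scale to all larger parameters.
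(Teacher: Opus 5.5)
The directions (3)$\Rightarrow$(2)$\Rightarrow$(1) are fine under your reading of (3). However, (1)$\Rightarrow$(3) is not actually proved. After setting up $F_{\sigma,\kappa}$ and $g_{\sigma,\kappa}$ you only state what you aim to show. The proposed repair (connectivity of $R_{\sigma_1,\kappa_0}$, tracing edges through $W$, bootstrapping) is not an argument, and these Rips graphs need not even be connected a priori.

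The missing step is already in your relation $\iota\approx_1 F_{\sigma',\kappa'}g_{\sigma,\kappa}$ once you take $\kappa'=\kappa$. Then $\iota\from R_{\sigma,\kappa}\to R_{\sigma',\kappa}$ is the identity on vertex sets, so every $y\in R_{\sigma',\kappa}$ lies within $1$ of $F_{\sigma',\kappa}(g_{\sigma,\kappa}y)$. Thus $F_{\sigma',\kappa}$ is $1$--dense whenever $\sigma'\geq c(\sigma,\kappa)$. You also have $g_{\sigma',\kappa}F_{\sigma',\kappa}\approx 1_W$ by uniqueness, and both maps are coarsely Lipschitz. Writing $y\approx_1 F_{\sigma',\kappa}w$ then gives $F_{\sigma',\kappa}g_{\sigma',\kappa}\approx 1$, so $F_{\sigma',\kappa}$ is a quasi-isometry. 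The circularity you worry about is illusory: you never need $Fg\approx 1$ at the original scale $\sigma$, only one scale up.

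From this, (3) follows in the sense the paper uses it (see the proof of Theorem \ref{thm:uqc-median}).
\begin{itemize}
\item For $\sigma''\geq\sigma'\geq c(\sigma,\kappa)$, the map $R_{\sigma',\kappa}\to R_{\sigma'',\kappa}$ is close to $F_{\sigma'',\kappa}g_{\sigma',\kappa}$, hence a quasi-isometry. So $\Tuple^\kappa D$ is coarsely geodesic by Lemma \ref{lem:rips-stable}.
\item Let $\kappa_0$ be a consistency constant for $\lambda$. Every $x\in\Tuple^{\kappa'}D$ is $\ell^\infty$--within $c(\sigma,\kappa')$ of $F_{\sigma,\kappa'}g_{\sigma,\kappa'}x\in\Tuple^{\kappa_0}D$, so $\Tuple^*D$ stabilises.
\end{itemize}
For this reading of (3), your (3)$\Rightarrow$(2) argument still goes through. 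Replace stability of the bifiltration by Lemma \ref{lem:rips-stable} together with coarse surjectivity of $\Tuple^\kappa D\hookrightarrow\Tuple^{\kappa'}D$.

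Separately, your target is false as stated. You aim to show that all transition maps between sufficiently large $(\sigma,\kappa)\leq(\sigma',\kappa')$ are quasi-isometries, by bootstrapping from one base scale. If ``sufficiently large'' means $\sigma\geq\sigma_0$ and $\kappa\geq\kappa_0$, this fails, because the scale threshold must grow with $\kappa$. Take one object $D_1=\{0\}\cup\{2^n : n\geq 0\}\subset\mathbb{R}$ with one loop $\phi(x)=2x$.
\begin{itemize}
\item $\Tuple^\kappa D=D_1\cap[0,\kappa]$ is bounded, so a point mapping to $0$ is a universal quasigeodesic cone.
\item Fix $\sigma$ and take $\kappa=2^{m+1}$ with $2^m>\sigma$. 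The vertex $2^{m+1}$ is isolated in $\Rips_\sigma\Tuple^\kappa D$, so $R_{\sigma,\kappa_0}\to R_{\sigma,\kappa}$ is not a quasi-isometry.
\end{itemize}
So if you keep your bifiltration reading of (3), the stable region must have the form $\sigma\geq s(\kappa)$. Your (3)$\Rightarrow$(2) survives this change, since there you choose $\sigma'$ after $\kappa'$.
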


  \subsection{Coarse median spaces}

   Background on coarse medians spaces can be found in \cite{Bow13a, Bow13b, Bow19}. We shall use an equivalent formulation due to Niblo--Wright--Zhang \cite{NWZ19}.

  \begin{definition}\label{def:median}
   A controlled map $\mu \from X^3 \to X$ is a \emph{coarse median} on a metric space $X$ if there exists $C \geq 0$ such that
   \begin{enumerate}
    \item (Coarse symmetry) $\mu(x_{\tau 1}, x_{\tau 2}, x_{\tau 3}) \approx_C \mu(x_1, x_2, x_3)$ for all permutations $\tau \in S_3$,
    \item (Coarse localisation) $\mu(x_1, x_1, x_2) \approx_C x_1$
    \item (Coarse four-point condition) $\mu(\mu(x_1, w, x_2), w, x_3) \approx_C \mu(x_1, w, \mu(x_2, w, x_3))$
   \end{enumerate}
   for all $x_1, x_2, x_3, w \in X$. Call $(X,\mu)$ a \emph{coarse median space} with \emph{coarse median constant} $C$.
  \end{definition}

    Here, we work with the $\ell^\infty$--metric on $X^3$.
    Any function $\mu' \from X^3 \to X$ close to a coarse median $\mu$ on $X$ is also a coarse median. We say two coarse medians $\mu, \mu'$ on $X$ are \emph{close} if they are close as controlled maps $\mu, \mu' \from X^3 \to X$. Let $[\mu]$ denote the closeness class of a coarse median $\mu$.

   A controlled map $f \from (X,\mu) \to (Y,\nu)$ between coarse median spaces is \emph{coarse-median preserving} (CMP) if $f \mu \approx \nu \circ f^{\times 3} \from X^3 \to Y$. In other words, there exists $\kappa\geq 0$ such that $f(\mu(x,y,z)) \approx_\kappa \nu(fx, fy, fz)$ for all $x,y,z\in X$. The assumption that the involved maps are controlled ensures that the CMP property is preserved under composition.

   \begin{lemma}\label{lem:ce-median}
   Let $(X,\mu)$ be a coarse median space and suppose that $f \from W \to X$ is a coarse equivalence. Then there exists a unique (up to closeness) coarse median $\nu$ on $W$ such that $f \from (W,\nu) \to (X, \mu)$ is CMP.    \end{lemma}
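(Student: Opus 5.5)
Let $g \from X \to W$ be a controlled coarse inverse of $f$, with $gf \approx_\kappa 1_W$ and $fg \approx_\kappa 1_X$. I would transport the median along $f$ and $g$ by setting
\[
\nu(w_1,w_2,w_3) := g\,\mu(fw_1, fw_2, fw_3),
\]
that is, $\nu = g \circ \mu \circ f^{\times 3}$. This map is controlled as a composite of controlled maps; note that $f^{\times 3}$ is controlled for the $\ell^\infty$ metrics on $W^3$ and $X^3$ with the same upper control as $f$. The plan is to check the axioms of Definition \ref{def:median}, then the CMP property, then uniqueness.

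For the axioms, each one follows by applying the corresponding axiom for $\mu$ and pushing the error through $g$. Let $\rho_g$ be an upper control for $g$ and $C$ the median constant of $\mu$.

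Coarse symmetry is immediate, with constant $\rho_g(C)$.

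For localisation, $\nu(w_1,w_1,w_2) = g\mu(fw_1,fw_1,fw_2) \approx_{\rho_g(C)} gfw_1 \approx_\kappa w_1$.

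The four-point condition is the only step needing care. Here the inner median appears as an argument, and
\[
\nu(\nu(w_1,w,w_2),w,w_3) = g\,\mu\bigl(fg\,\mu(fw_1,fw,fw_2), fw, fw_3\bigr).
\]
Since $fg \approx_\kappa 1_X$ and $\mu$ is controlled with upper control $\rho_\mu$, we may replace $fg\mu(\dots)$ by $\mu(\dots)$ at a cost of $\rho_\mu(\kappa)$. We then apply the four-point condition for $\mu$, and undo the same substitution on the other side. This gives a total error of at most $\rho_g(2\rho_\mu(\kappa) + C)$, a uniform constant, so this step is routine bookkeeping.

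For the CMP property, $f\nu(w_1,w_2,w_3) = fg\,\mu(fw_1,fw_2,fw_3) \approx_\kappa \mu(fw_1,fw_2,fw_3)$, so $f$ is CMP.

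For uniqueness, suppose $\nu'$ is another coarse median on $W$ for which $f$ is CMP, say with $f\nu' \approx_{\kappa'} \mu \circ f^{\times 3}$. Then
\[
\nu' \approx_\kappa gf\nu' \approx_{\rho_g(\kappa')} g\,\mu \circ f^{\times 3} = \nu,
\]
so $\nu'$ is close to $\nu$. The same argument shows that any two such medians are close to each other. The construction does not depend on the choice of $g$ up to closeness, but this is not even needed, because uniqueness holds for the class $[\nu]$.
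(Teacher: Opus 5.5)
Your proof is correct and takes the paper's approach: transport $\mu$ through the coarse inverse, then read off CMP and uniqueness from $fg \approx 1_X$ and $gf \approx 1_W$. Your formula $\nu = g\circ\mu\circ f^{\times 3}$ is the correctly typed one for this statement; the paper's displayed $f\circ\mu\circ g^{\times 3}$ on ``$Y$'' has the roles of $f$ and $g$ swapped (it is the pushforward version). You also supply the axiom checks that the paper leaves as ``straightforward''.
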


   \proof
   Let $g \from X \to W$ be a coarse inverse of $f$. It is straightforward to verify that the  ternary operator $\nu:= f \circ \mu \circ g^{\times 3} \from Y^3 \to Y$ defines a coarse median on $Y$. Note that
   \[\nu \circ f^{\times 3} \approx f \circ \mu \circ g^{\times 3}\circ f^{\times 3} \approx f \mu \circ 1_W^{\times 3} \approx f\mu,\]
   hence $f \from (W,\nu) \to (X,\mu)$ is CMP. To verify uniqueness, suppose $\nu'$ is coarse median on $Y$ such that $f\from (W,\nu') \to (X,\mu)$ is CMP. Then
    \[\nu \approx \nu \circ f^{\times 3} \circ g^{\times 3} \approx \nu' \circ f^{\times 3} \circ g^{\times 3} \approx \nu'\]
    as desired.
   \endproof

    Let $(X,\mu)$ be a coarse median space with coarse median constant $C\geq 0$ and upper control $\rho$ for $\mu$. Given points $x,y \in X$ and $L \geq 0$, define their \emph{$L$--coarse interval} to be
    \[[x,y]_L := \set{z \in X \st \mu(x,y,z) \approx_L z}.\]

  \begin{lemma}\label{lem:intervals}
   Let $x,y \in X$. Then
   \begin{enumerate}
    \item $x, y \in [x,y]_{2C}$ and $\mu(x,y,z) \in [x,y]_{C + \rho C}$ for all $z\in X$,
    \item $\calN_r[x,y]_L \subseteq [x,y]_{L + r + \rho r}$ for all $L, r \geq 0$, and
    \item If $z \in [x,y]_L$ then $[x,z]_L \subseteq [x,y]_{L'}$ where $L'$ depends only on $C$, $L$, and $\rho$.
   \end{enumerate}
  \end{lemma}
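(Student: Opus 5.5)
All three parts are verified directly from the three axioms of Definition~\ref{def:median} (coarse symmetry, coarse localisation, and the coarse four-point condition), together with the upper control $\rho$ of $\mu$. Throughout, we use that changing one argument of $\mu$ by at most $r$ moves the output by at most $\rho(r)$, since $X^3$ carries the $\ell^\infty$--metric. The strategy is to manipulate expressions with the axioms until each membership condition $\mu(x,y,z)\approx_{L} z$ is reduced to an earlier estimate, and to track the constants.

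For (1), coarse localisation gives $\mu(x,x,y)\approx_C x$. Coarse symmetry then gives $\mu(x,y,x)\approx_C \mu(x,x,y)$, so $\mu(x,y,x)\approx_{2C} x$; that is, $x\in[x,y]_{2C}$. The point $y$ is handled symmetrically.

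For $m=\mu(x,y,z)$, we must show $\mu(x,y,m)\approx m$. I would first use coarse symmetry to rewrite $m\approx_C \mu(z,x,y)$. Then I would apply the four-point condition with $w=x$, which lets us nest medians sharing a common argument. Concretely, $\mu(x,y,m)$ is, up to symmetry, $\mu(\mu(z,x,y),x,y)$. The four-point condition turns this into $\mu(z,x,\mu(y,x,y))$, and localisation gives $\mu(y,x,y)\approx y$. Pushing these errors through $\rho$ returns $\mu(z,x,y)\approx m$. The constants should be massaged to $C+\rho C$, possibly after choosing which symmetry to apply cleverly.

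For (2), suppose $d(z',z)\le r$ with $z\in[x,y]_L$. Then
\[
\mu(x,y,z')\approx_{\rho r}\mu(x,y,z)\approx_L z\approx_r z',
\]
which gives the stated constant $L+r+\rho r$ directly.

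For (3), take $z\in[x,y]_L$ and $u\in[x,z]_L$. We need $\mu(x,y,u)\approx u$. First replace $u$ by $\mu(x,z,u)$, at a cost of $\rho(L)$. Next replace the inner $z$ by $\mu(x,y,z)$, at a cost of $\rho\rho(L)$. After permuting arguments so that $x$ is the common point $w$, the expression becomes
\[
\mu(\mu(y,x,\mu(z,x,y)),x,u)
\]
or similar. The four-point condition, applied twice, regroups this into $\mu(z,x,\mu(y,x,u))$-type expressions. In the end this collapses to $\mu(x,z,u)\approx u$. I expect this regrouping to be the main obstacle: finding the correct sequence of symmetries and four-point moves so that the nested median simplifies. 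The first thing I would try is the standard identity, in a median algebra, that $z\in[x,y]$ and $u\in[x,z]$ imply $\mu(x,y,u)=\mu(x,\mu(x,y,z),u)=\mu(\mu(x,u,y),\ldots)$. Each exact step would be replaced by a $C$-error and propagated through iterates of $\rho$. Since the number of steps is fixed, $L'$ depends only on $C$, $L$ and $\rho$.
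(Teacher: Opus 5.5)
The gap is in (3). You never carry out the decisive step, as you acknowledge yourself, and neither of your suggestions supplies it.

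Read literally, "apply the four-point condition twice" starting from $\mu(\mu(y,x,m),x,u)$ with $m=\mu(z,x,y)$ gives $\mu(y,x,\mu(m,x,u))$ and then $\mu(y,x,\mu(z,x,\mu(y,x,u)))$. That expression still contains $\mu(y,x,u)$, which up to symmetry is the very quantity you are trying to show is close to $u$.

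The median-algebra identity $\mu(x,y,u)=\mu(x,\mu(x,y,z),u)$ does not help either. Once you substitute $\mu(x,y,z)=z$ and $\mu(x,z,u)=u$, it \emph{is} the conclusion $\mu(x,y,u)=u$, so invoking it is circular. As written, (3) is not proved.

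The missing observation is that one four-point move with common point $x$ suffices. You should start from $u$ rather than from $\mu(x,y,u)$, and use $z\in[x,y]_L$ to substitute $\mu(z,x,y)$ for $z$. This is exactly the paper's proof:
\begin{align*}
 u &\approx_{L+C} \mu(u,x,z) \approx_{\rho L} \mu(u,x,\mu(x,y,z)) \approx_{\rho C} \mu(u,x,\mu(z,x,y))\\
 &\approx_{C} \mu(\mu(u,x,z),x,y) \approx_{\rho(L+C)} \mu(u,x,y) \approx_{C} \mu(x,y,u).
\end{align*}
Alternatively, your own chain closes at the stage $\mu(\mu(y,x,m),x,u)$:
\begin{itemize}
\item invoke part (1), with symmetry, to replace $\mu(y,x,m)$ by $m$;
\item use $m\approx_{C+L}z$ to pass to $\mu(z,x,u)$;
\item finish with $\mu(z,x,u)\approx_{C+L}u$.
\end{itemize}

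Parts (1) and (2) are otherwise fine. Part (2) is the paper's argument verbatim. For (1), the paper applies the four-point condition with $w=y$ directly, $\mu(x,y,\mu(x,y,z))\approx_C\mu(\mu(x,y,x),y,z)$, which avoids your two symmetry moves.

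Your route gives a constant like $3C+\rho C+\rho(2C)$ rather than $C+\rho C$, and no choice of symmetries you describe recovers exactly $C+\rho C$. Strictly, even the paper's chain gives $C+\rho(2C)$, since $\mu(x,y,x)\approx_{2C}x$ needs both symmetry and localisation. This is harmless: only the dependence on $C$ and $\rho$ matters, up to enlarging $B$ in the bounded coarse interval image condition.
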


  \proof
  \begin{enumerate}
   \item
   Coarse symmetry and coarse localisation imply that $x, y \in [x,y]_{2C}$, while
   \[\mu(x,y, \mu(x,y,z)) \approx_C \mu(\mu(x,y,x), y, z) \approx_{\rho C}
\mu(x,y,z)\]
   follows from coarse symmetry and the coarse 4--point condition.
   \item Let $w \approx_r z \in [x,y]_L$. Then
   $\mu(x,y,w) \approx_{\rho r} \mu(x,y,z) \approx_L z \approx_r w$.
   \item Let $w \in [x,z]_L$. Then
   \begin{align*}
    w &\approx_{L+C} \mu(w,x,z) \approx_{\rho L} \mu(w,x,\mu(x,y,z)) \approx_{\rho C} \mu(w,x,\mu(z,x,y))\\
    &\approx_{C} \mu(\mu(w,x,z), x, y) \approx_{\rho (C+L)} \mu(w,x,y) \approx_C \mu(x,y,w). \qedhere
   \end{align*}
  \end{enumerate}
   \endproof

  Next, we recall a coarse version of the 5--point condition from median algebras. This follows from Bowditch's original definition of coarse medians; see \cite[Section 2.3]{NWZ21}.

  \begin{lemma}\label{lem:five}
   There exists a constant $E\geq 0$ depending on $C$ and $\rho$ such that
   \[\mu(v,w, \mu(x,y,z)) \approx_E \mu(\mu(v,w,x), \mu(v,w,y), z)\]
   for all $v,w,x,y,z \in X$. \qed
  \end{lemma}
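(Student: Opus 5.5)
The plan is to deduce the estimate from Bowditch's original definition of a coarse median, using the theorem of Niblo--Wright--Zhang \cite{NWZ19} that it is equivalent to Definition~\ref{def:median}. The point is that the five-point identity
\[m(v,w,m(x,y,z)) = m(m(v,w,x), m(v,w,y), z)\]
holds exactly in any median algebra $(\Pi, m)$. So it suffices to approximate the five given points by a finite median algebra and transport the identity back to $X$, paying only controlled errors.

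First I would invoke the equivalence of definitions from \cite{NWZ19} (see also \cite[Section 2.3]{NWZ21}). If $\mu$ satisfies Definition~\ref{def:median} with constant $C$ and upper control $\rho$, then there is a function $h \from \mathbb{N} \to [0,\infty)$, depending only on $C$ and $\rho$, with the following property. For every finite $A \subseteq X$ with $|A| \leq p$ there exist a finite median algebra $(\Pi, m)$ and maps $\lambda \from A \to \Pi$ and $\pi \from \Pi \to X$ such that $d(\pi\lambda a, a) \leq h(p)$ for all $a \in A$, and
\[\pi m(s,t,u) \approx_{h(p)} \mu(\pi s, \pi t, \pi u) \quad \text{for all } s,t,u \in \Pi.\]
This is Bowditch's axiom. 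Its derivation from the NWZ axioms, with constants depending only on $C$ and $\rho$, is the genuinely nontrivial input and the step I expect to be the main obstacle. I would not reprove it; I would cite it, after checking that the cited equivalence really gives uniform dependence of $h$ on $(C,\rho)$ alone.

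Next I apply this with $A = \{v,w,x,y,z\}$ and $p = 5$, writing $h = h(5)$, $\bar a = \lambda a$ and $a' = \pi \bar a$. Each $a'$ lies within $h$ of $a$. Working from the left-hand side:
\begin{align*}
\mu(v,w,\mu(x,y,z)) &\approx_{\rho h + \rho\rho h} \mu(v',w',\mu(x',y',z')) \approx_{\rho h} \mu(v',w',\pi m(\bar x,\bar y,\bar z))\\
&\approx_{h} \pi\, m(\bar v,\bar w, m(\bar x,\bar y,\bar z)).
\end{align*}
In the middle step I use that $\mu(x',y',z')$ and $\pi m(\bar x,\bar y,\bar z)$ are $h$-close, and then apply $\rho$. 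In the last step the inner term is $\pi$ of a point of $\Pi$, so the quasi-morphism estimate for $\pi$ applies directly.

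The same chain run on the right-hand side, with $\mu(v,w,x)$ and $\mu(v,w,y)$ each replaced by $\pi m(\bar v,\bar w,\bar x)$ and $\pi m(\bar v,\bar w,\bar y)$, shows that $\mu(\mu(v,w,x),\mu(v,w,y),z)$ is close to $\pi\, m(m(\bar v,\bar w,\bar x), m(\bar v,\bar w,\bar y), \bar z)$. The error again involves only $h$ and $\rho$ applied at most twice.

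Finally, the exact five-point identity in $\Pi$ shows that the two points of $\Pi$ being mapped by $\pi$ coincide. Adding the two error bounds therefore gives a constant $E$ depending only on $h(5)$ and $\rho$, hence only on $C$ and $\rho$.
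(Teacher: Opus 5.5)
Your proposal is correct and follows the paper's own route. The paper gives no separate argument; it only remarks that the coarse 5--point condition follows from Bowditch's original definition, via the Niblo--Wright--Zhang equivalence whose constants depend only on $C$ and $\rho$. Your proof carries out exactly that deduction: you transport the exact distributive identity $m(m(x,y,z),v,w) = m(m(x,v,w),m(y,v,w),z)$ from the approximating finite median algebra back to $X$, and your error bookkeeping (e.g.\ $\rho h + \rho\rho h$ on the left and $\rho(\rho h + h) + h$ on the right) is sound.
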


  \begin{lemma}\label{lem:tripod}
  If $O \in [x,y]_L \cap [y,z]_L \cap [z,x]_L$ then $O \approx_R \mu(x,y,z)$ where $R$ depends on $\rho$, $L$, and $C$.
  \end{lemma}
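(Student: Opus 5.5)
The plan is to show that $O$ is coarsely fixed by the operator $\mu(x,y,\cdot)$ and then to move the variables one at a time using the coarse five-point condition of Lemma \ref{lem:five}.

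First, from $O \in [x,y]_L$, $O\in[y,z]_L$ and $O\in[z,x]_L$ we get
\[O \approx_L \mu(x,y,O), \qquad O \approx_L \mu(y,z,O), \qquad O\approx_L \mu(z,x,O).\]
Coarse symmetry lets us permute the arguments freely at cost $C$ each time.

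Next, we write $m := \mu(x,y,z)$. Since $O \approx_L \mu(z,x,O)$, the upper control gives
\[m = \mu(x,y,z) \approx \mu(x,y,z)\quad\text{and}\quad \mu(x,y,O)\approx_{\rho L}\mu(x,y,\mu(z,x,O)).\]
We then apply Lemma \ref{lem:five} with $(v,w)=(x,y)$ and inner triple $\mu(z,O,x)$ (after symmetrising):
\[\mu(x,y,\mu(z,O,x)) \approx_E \mu(\mu(x,y,z),\mu(x,y,O),x).\]
Here $\mu(x,y,O)\approx_L O$, so the right-hand side is within $\rho L$ of $\mu(m,O,x)$. Combining these gives
\[O \approx_{L+\rho L} \mu(x,y,\mu(z,x,O)) \approx_{E+\rho L+\ldots} \mu(m,O,x).\]
In other words, $O$ lies in the coarse interval $[m,x]_{L_1}$, with $L_1$ depending only on $C,L,\rho$. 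Running the same argument with the roles of the points permuted gives $O\in[m,y]_{L_1}$.

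For the final step, we compute $\mu(x,y,O)$ in a second way. By Lemma \ref{lem:intervals}(1), $m\in[x,y]_{C+\rho C}$, so $m\approx\mu(x,y,m)$. We apply Lemma \ref{lem:five} again, now with $(v,w)=(m,O)$:
\[\mu(m,O,\mu(x,y,m)) \approx_E \mu(\mu(m,O,x),\mu(m,O,y),m) \approx \mu(O,O,m)\approx_C O,\]
using $\mu(m,O,x)\approx O\approx\mu(m,O,y)$ together with coarse localisation. On the other hand, $\mu(x,y,m)\approx m$ gives
\[\mu(m,O,\mu(x,y,m))\approx \mu(m,O,m)\approx m.\]
Comparing the two computations yields $O\approx_R m$, where $R$ is an explicit expression in $C,L,\rho$ (with $E$ depending only on $C,\rho$).

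The main care point is the bookkeeping in the middle step: one must symmetrise at each use of Lemma \ref{lem:five} and push each approximation through the upper control $\rho$ so that the constants stay dependent only on $C,L,\rho$. No step requires more than the median axioms together with Lemmas \ref{lem:intervals} and \ref{lem:five}.
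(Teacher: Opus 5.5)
Your proof is correct. Its first half is the same as the paper's: both apply Lemma \ref{lem:five} with $(v,w)=(x,y)$ to the inner triple $(z,O,x)$, and use $O\in[x,y]_L\cap[z,x]_L$ to get $\mu(m,O,x)\approx O$, i.e.\ $O$ lies coarsely in $[m,x]$.

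The routes differ only in the closing step. The paper uses this single membership together with the hypothesis it has not yet spent, $O\in[y,z]_L$, and with the fact that $m$ lies coarsely in $[y,z]$. It applies Lemma \ref{lem:five} with $(v,w)=(y,z)$ to the triple $(x,m,O)$. Then $\mu(y,z,\mu(x,m,O))$ is close to $\mu(y,z,O)\approx O$ on one side, and to $\mu(\mu(y,z,x),\mu(y,z,m),O)\approx\mu(m,m,O)\approx m$ on the other.

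You instead rerun the first step with the points permuted to get $O\in[m,y]$ as well. You then apply Lemma \ref{lem:five} with $(v,w)=(m,O)$ to $(x,y,m)$, using $m\in[x,y]_{C+\rho C}$ from Lemma \ref{lem:intervals}. This is the coarse form of the median-algebra fact $[m,x]\cap[m,y]=[m,\mu(m,x,y)]=[m,m]$.

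Your version is a little more conceptual and symmetric. However, it uses the five-point condition three times rather than twice, so your $R$ comes out larger; the paper's route is more economical.

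One expository slip: in your final paragraph the quantity you compute in two ways is $\mu(m,O,\mu(x,y,m))$, not $\mu(x,y,O)$ as announced.
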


  \proof
  Let $m = \mu(x,y,z)$. By the coarse 5--point condition, we deduce
   \[
   \mu(O, m, x) \approx_{\rho L} \mu(\mu(x,y,O), \mu(x,y,z), x)
   \approx_{E} \mu(x, y, \mu(O, z, x))
   \approx_{\rho (L+C)} \mu(x, y, O)
    \approx_{L} O.\]
    Choosing $K' \geq \rho L + E + \rho(L+C) + L$, it follows that
  \begin{align*}
   m & \approx_C \mu(m, m, O) \approx_{\rho (C + C')} \mu(\mu(y,z,x), \mu(y,z,m), O)\\
   &\approx_{E} \mu(y, z, \mu(x, m, O)) \approx_{\rho (K'+C)} \mu(y, z, O) \approx_{L} O. \qedhere
    \end{align*}
   \endproof

Following Fioravanti \cite{Fio24}, a subset $U \subseteq X$ is an \emph{approximate median subalgebra} if there exists $R \geq 0$ such that $\mu(U^3) \subseteq \calN_R(U)$. For such a subset, define a ternary operator $\mu|_U$ on $U$ by choosing $\mu|_U(x,y,z) \in U$ to be any point within distance $R$ of $\mu(x,y,z)$ in $X$. Up to closeness, $\mu|_U$ is the unique coarse median on $U$ which is close to the restriction of $\mu$ to $U^3 \subseteq X^3$; in particular, $[\mu|_U]$ depends only on $[\mu]$. We say that $\mu|_U$ is \emph{induced} by $\mu$. Any subset $U' \subseteq X$ at finite Hausdorff distance from $U$ is also an approximate median subalgebra.

    \section{Coarse median cones}

    We now consider \uc~diagrams and cones in the coarse median setting. The \emph{coarse median category} $\CMed$ has coarse median spaces as objects and CMP controlled maps as morphisms. A \emph{\uc~coarse median diagram} $M \from \sJ \sqto \CMed$ is a \uc~diagram with uniformly coarse median spaces $M_j := (X_j, \mu_j)$ as objects (i.e., they admit a common coarse median constant and upper control), and whose bonding maps $M_\phi \from (X_i, \mu_i) \to (X_j, \mu_j)$ are uniformly CMP. Let $\hat M \from \sJ \sqto \Born$ denote the underlying \uc~diagram of metric spaces, obtained by forgetting the coarse medians. A \emph{\uc~coarse median cone} $\lambda \from (W,\nu) \sqtto M$ is a \uc~cone over $\hat M$ with a coarse median apex $(W,\nu)$ whose legs are uniformly CMP. Call $\lambda$ a \emph{universal} \uc~coarse median cone if every \uc~coarse median cone $\zeta \from (Z,\eta) \sqtto M$ factors through $\lambda$ via a unique (up to closeness) CMP controlled map.

   Fix a common coarse median constant $C\geq 0$ and upper control $\rho$ for the objects $(M_j, \mu_j)$, and $c \geq 0$ such that $\mu_j \circ (M_\phi)^{\times 3} \approx_c (M_\phi) \circ \mu_i$ for all arrows $\phi \from i \to j$. The uniformity assumptions allow us to equip the $\ell^\infty$--product $\prod_j X_j$ with the product coarse median $\prod_j \mu_j$; this coarse median space shall be denoted as $\prod_j M_j$.

   Say that a subset $U \subseteq \prod_j M_j$ is \emph{$M$--compatible} if $U \subset \Tuple^\kappa \hat M$ for some $\kappa \geq 0$. For such a set $U$, we may define a \uc~cone $\pi|_U \from U \sqtto \hat M$ where the legs $(\pi|_U)_j$ are restrictions of the projection to each $M_j$--factor. If, in addition, $U$ is an approximate median subalgebra of $\prod_j M_j$ then $\pi|_U \from (U,(\prod_j \mu_j)|_U) \sqtto M$ is a \uc~coarse median cone. (Indeed, each factorwise projection $\pi_i \from \prod_j M_j \to M_i$ is uniformly CMP.) Note that if $U$ is $M$--compatible (resp.~an approximate median subalgebra) then so is $\calN_r(U)$ for any $r \geq 0$.

   We show that any \uc~coarse median cone $\lambda \from (W,\nu) \sqtto M$ factors canonically (up to closeness) through an $M$--compatible approximate median subalgebra. Let $I(\lambda) \subseteq \prod_j M_j$ denote the image of $\prod_j \lambda_j \from (W,\nu) \to \prod_j M_j$.

   \begin{lemma}\label{lem:prod-med}
    Let $\lambda \from (W,\nu) \sqtto M$ be a \uc~coarse median cone. Then $\prod_j \lambda_j$ is CMP and $I(\lambda)$ is an $M$--compatible approximate median subalgebra.
   \end{lemma}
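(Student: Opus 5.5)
The plan is to verify both claims directly from the definitions, using uniformity of the constants throughout. Write $\Lambda := \prod_j \lambda_j \from W \to \prod_j X_j$ and $\mu := \prod_j \mu_j$. Since $\lambda$ is a \uc~cone, the legs $\lambda_j$ share a common upper control $\rho_\lambda$. Since $\prod_j X_j$ carries the $\ell^\infty$--metric, $\rho_\lambda$ is also an upper control for $\Lambda$, so $\Lambda$ is controlled.

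\emph{CMP.} By assumption the legs are uniformly CMP, so there is $\kappa_0 \geq 0$ with $\lambda_j(\nu(x,y,z)) \approx_{\kappa_0} \mu_j(\lambda_j x, \lambda_j y, \lambda_j z)$ for all $j$ and all $x,y,z \in W$. The $j$--th coordinate of $\mu(\Lambda x, \Lambda y, \Lambda z)$ is exactly $\mu_j(\lambda_j x, \lambda_j y, \lambda_j z)$, by definition of the product coarse median. Taking the supremum over $j$ in the $\ell^\infty$--metric therefore gives $\Lambda \nu \approx_{\kappa_0} \mu \circ \Lambda^{\times 3}$, so $\Lambda$ is CMP. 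The only point to be careful about is that $\kappa_0$ must be uniform in $j$; this is precisely what ``uniformly CMP'' supplies, and it is the step where non-uniform hypotheses would fail, so I will state it explicitly.

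\emph{Approximate subalgebra.} Let $a,b,c \in I(\lambda)$ and write $a = \Lambda x$, $b = \Lambda y$, $c = \Lambda z$. The CMP estimate above gives $\mu(a,b,c) \approx_{\kappa_0} \Lambda(\nu(x,y,z)) \in I(\lambda)$. Hence $\mu(I(\lambda)^3) \subseteq \calN_{\kappa_0}(I(\lambda))$, so $I(\lambda)$ is an approximate median subalgebra with constant $R = \kappa_0$.

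\emph{$M$--compatibility.} Since $\lambda$ is a \uc~cone, there is $\kappa \geq 0$ with $\lambda_j \approx_\kappa M_\phi \lambda_i$ for every arrow $\phi \from i \to j$. Consequently, for every $w \in W$ the tuple $\Lambda w = (\lambda_j w)_j$ satisfies the consistency condition defining $\Tuple^\kappa \hat M$. (I read the condition in that definition as $x_j \approx_\kappa M_\phi x_i$.) Thus $I(\lambda) \subseteq \Tuple^\kappa \hat M$, which is the content of Lemma \ref{lem:uc-factor} applied to the cone $\lambda$. None of these steps is a genuine obstacle; the argument is bookkeeping of uniform constants.
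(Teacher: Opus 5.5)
Your proposal is correct and follows essentially the same route as the paper: the uniform CMP constant for the legs passes coordinatewise to the $\ell^\infty$--product. This gives both the CMP property of $\prod_j \lambda_j$ and the approximate subalgebra property of $I(\lambda)$, while $M$--compatibility is just the cone condition as packaged in Lemma~\ref{lem:uc-factor}. Your explicit check that $\prod_j \lambda_j$ is controlled (via the common upper control of the legs), and your reading of the consistency condition as $x_j \approx_\kappa M_\phi x_i$, are both correct and fill in details the paper leaves implicit.
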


   \proof
   By assumption, there exists $r \geq 0$ such that $\lambda_j \nu \approx_r \mu_j \lambda_j^{\times 3}$ for all objects $j$, and so    \[(\prod_j \lambda_j) \circ \nu \approx_r (\prod_j \mu_j)\circ(\prod_j \lambda_j)^{\times 3}.\]
   Note that $I(\lambda)^3 = (\prod_j \lambda_j)^{\times 3}(W^3)$. Thus, $(\prod_j \mu_j)(I(\lambda)^3) \subseteq \calN_r(I(\lambda))$, hence $I(\lambda)$ is an approximate median subalgebra. By Lemma \ref{lem:uc-factor}, $I(\lambda)$ is $M$--compatible.
  \endproof

   Therefore $\lambda = \pi|_{I(\lambda)} \circ (\prod_j \lambda_j)$ yields the desired factorisation, upon equipping $I(\lambda)$ with the coarse median induced by $\prod_j \mu_j$. Thus, to characterise universal \uc~coarse median cones over $M$, we may focus our attention on $M$--compatible approximate median subalgebras of $\prod_j M_j$. Define an ordering on subsets $U, U' \subseteq \prod_j M_j$ by declaring $U \prec U'$ if and only if $U \subseteq \calN_r(U')$ for some $r \geq 0$.

   \begin{proposition}\label{prop:maximal}
     A \uc~coarse median diagram $M\from \sJ \sqto \CMed$ admits a universal \uc~coarse median cone if and only if there exists a $\prec$--greatest $M$--compatible approximate median subalgebra $U$ of $\prod_j M_j$. In that case, a universal \uc~coarse median cone is realised by $\pi|_U$.
   \end{proposition}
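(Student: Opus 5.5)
We prove the two implications separately, using Lemma \ref{lem:prod-med} to move between cones and subsets of $\prod_j M_j$. The key observation is that a CMP controlled map $f \from (Z,\eta) \to (U,(\prod_j\mu_j)|_U)$ with $\zeta \approx \pi|_U f$ is forced, up to closeness, to be a nearest-point map: its $j$--coordinates must be uniformly close to $\zeta_j$, so $f$ is close to $\prod_j \zeta_j$ read in the $\ell^\infty$--metric. Uniqueness of factorisations will therefore come for free, because $U$ carries the restricted $\ell^\infty$--metric and two maps into $U$ whose legs are uniformly close are close.

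\emph{($\Leftarrow$)} Let $U$ be a $\prec$--greatest $M$--compatible approximate median subalgebra. As remarked before Lemma \ref{lem:prod-med}, $\pi|_U$ is a \uc~coarse median cone. Take any \uc~coarse median cone $\zeta \from (Z,\eta) \sqtto M$. By Lemma \ref{lem:prod-med}, $I(\zeta)$ is an $M$--compatible approximate median subalgebra and $\prod_j\zeta_j$ is CMP. By maximality, $I(\zeta) \subseteq \calN_r(U)$ for some $r \geq 0$. Define $f \from Z \to U$ by choosing $f(z) \in U$ within $r$ of $(\zeta_j z)_j$. Then:
\begin{itemize}
\item $f$ is close to the controlled map $\prod_j\zeta_j$, and its legs satisfy $\pi|_U f \approx \zeta$;
\item $f$ is controlled, since the $\zeta_j$ share an upper control;
\item $f$ is CMP, because $f$ is close to the CMP map $\prod_j\zeta_j$ and $(\prod_j\mu_j)|_U$ is close to $\prod_j\mu_j$ on $U^3$.
\end{itemize}
For uniqueness, any other CMP $f'$ with $\pi|_U f' \approx \zeta$ has all coordinates uniformly close to those of $f$, so $f' \approx f$ in the $\ell^\infty$--metric.

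\emph{($\Rightarrow$)} Let $\lambda \from (W,\nu) \sqtto M$ be universal and set $U := I(\lambda)$. By Lemma \ref{lem:prod-med}, $U$ is an $M$--compatible approximate median subalgebra. Now let $V$ be any $M$--compatible approximate median subalgebra. Then $\pi|_V \from (V,(\prod\mu_j)|_V) \sqtto M$ is a \uc~coarse median cone, so by universality there is a CMP controlled $g \from V \to W$ with $\pi|_V \approx \lambda g$. This means that for $v \in V$ the tuple $(\lambda_j g v)_j \in U$ is uniformly $\ell^\infty$--close to $v$, that is, $V \subseteq \calN_r(U)$. Hence $U$ is $\prec$--greatest.

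For the final claim, note that both $\lambda$ and $\pi|_{I(\lambda)}$ are then universal, so they agree up to closeness. The one point needing care is that the factorisation $\lambda = \pi|_{I(\lambda)}\circ \prod_j\lambda_j$ passes through a CMP map; Lemma \ref{lem:prod-med} supplies exactly this. I expect no serious obstacle. The only mild subtlety is bookkeeping: the uniform constants in the definition of \uc~cones must make coordinatewise closeness uniform over $j$, so that it really yields $\ell^\infty$--closeness. This holds because all the approximations in the definitions are uniform in $j$.
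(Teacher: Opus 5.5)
Your proof is correct and follows essentially the same route as the paper. For ($\Rightarrow$) you take $I(\lambda)$ and apply universality to $\pi|_V$. For ($\Leftarrow$) your pointwise choice of $f(z)\in U$ near $(\zeta_j z)_j$ is exactly the paper's composite $h\circ\prod_j\zeta_j$, where $h$ is a coarse inverse of $U\hookrightarrow\calN_r(U)$; as in the paper, uniqueness is read off coordinatewise in the $\ell^\infty$--metric.

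Two small remarks on your final paragraph. It is unnecessary, since the ($\Leftarrow$) direction already shows that $\pi|_U$ is universal for any $\prec$--greatest $U$. Also, two universal cones with different apexes are related by a CMP coarse equivalence, not ``close'' in the sense defined in the paper.
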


   \proof
   Assume $\lambda \from (W,\nu) \sqtto M$ is a universal \uc~coarse median cone. By Lemma \ref{lem:prod-med}, $I(\lambda)$ is an $M$--compatible approximate median subalgebra of $\prod_j M_j$. Let $U \subseteq \prod_j M_j$ be any $M$--compatible approximate median subalgebra. Then $\pi|_U \from (U, (\prod_j \mu_j)|_U) \sqtto M$ is a \uc~coarse median cone. By the universal property, $\pi|_U$ factors through $\lambda$ up to closeness, and so $I(\pi|_U) = U$ lies in a bounded neighbourhood of $I(\lambda)$, hence $U \prec I(\lambda)$.

   For the converse, assume $U \subseteq \prod_j M_j$ is $\prec$--greatest among $M$--compatible approximate median subalgebras. For any \uc~coarse median cone $\zeta \from (Z,\nu) \sqtto M$, the set $I(\zeta)$ is an $M$--compatible approximate median subalgebra, by Lemma \ref{lem:prod-med}, hence $I(\zeta) \subseteq \calN_r(U)$ for some $r \geq 0$. Let $h$ be a coarse inverse to the inclusion $U \hookrightarrow \calN_r(U)$. Note that $h$ is CMP with respect to the induced coarse medians. Then $\zeta$ factors as
   \[\zeta = \pi|_{\calN_r(U)} \circ \prod_j \zeta_j \approx \pi|_U \circ h \circ \prod_j \zeta_j. \]
   To verify uniqueness (up to closeness), suppose that $\pi|_U \circ h' \approx_s \pi|_U \circ h''$ for some CMP controlled maps $h', h'' \from (Z, \nu) \to (U,(\prod_j \mu_j)|_U)$ and $s \geq 0$. Then
  \[h'z = (\pi_j h'z)_j = ((\pi|_U)_j h'z)_j \approx_s ((\pi|_U)_j h''z)_j = (\pi_j h''z)_j = h''z\]
  for all $z \in Z$, hence $h' \approx h''$. Therefore, $\pi|_U$ is a universal \uc~coarse median cone.
   \endproof

  We now show that $M$ induces a coarse median on its \uc~limit cone (if it exists).

   \begin{theorem}\label{thm:univ-cmed-cone}
   Let $M\from \sJ \sqto \CMed$ be a \uc~coarse median diagram such that $\uclim_\sJ \hat M$ exists. Then $\Tuple^\kappa \hat M$ is a $\prec$--greatest $M$--compatible approximate median subalgebra of $\prod_j M_j$ for $\kappa$ large. Consequently, a universal \uc~coarse median cone over $M$ is realised by $\Tuple^\kappa \hat M$ equipped with the coarse median induced by $\prod_j \mu_j$ for $\kappa$ large.
   \end{theorem}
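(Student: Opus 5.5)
Fix $\kappa_0$ so that, by Proposition \ref{prop:stable}, $\Tuple^*\hat M$ stabilises beyond $\kappa_0$. That is, for all $\kappa_0 \le \kappa \le \kappa'$ the inclusion $\Tuple^\kappa \hat M \hookrightarrow \Tuple^{\kappa'} \hat M$ is coarsely surjective, so $\Tuple^{\kappa'}\hat M \subseteq \calN_{r(\kappa,\kappa')}(\Tuple^\kappa \hat M)$. The plan has three parts: show $\Tuple^\kappa\hat M$ is $M$--compatible, show it is an approximate median subalgebra, and show it is $\prec$--greatest. The final sentence then follows at once from Proposition \ref{prop:maximal}.

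Compatibility holds by definition. For greatness, let $U$ be any $M$--compatible approximate median subalgebra. Then $U \subseteq \Tuple^{\kappa'}\hat M$ for some $\kappa'$, and we may take $\kappa' \ge \kappa$ since the tuplespaces increase with the parameter. By stabilisation, $U \subseteq \calN_r(\Tuple^\kappa \hat M)$, that is, $U \prec \Tuple^\kappa\hat M$. Note that greatness needs no median structure at all; the only real content is the subalgebra property.

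For the subalgebra property, take $x,y,z \in \Tuple^\kappa\hat M$ and set $m = (\prod_j\mu_j)(x,y,z)$, so $m_j = \mu_j(x_j,y_j,z_j)$. For an arrow $\phi \from i \to j$ we estimate
\[ M_\phi m_i = M_\phi\mu_i(x_i,y_i,z_i) \approx_c \mu_j(M_\phi x_i, M_\phi y_i, M_\phi z_i) \approx_{\rho\kappa} \mu_j(x_j,y_j,z_j) = m_j, \]
using the uniform CMP constant $c$ and the common upper control $\rho$ of the $\mu_j$ on the $\ell^\infty$--product. Hence $m \in \Tuple^{\kappa'}\hat M$ with $\kappa' = c+\rho\kappa$. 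Stabilisation then places $m$ within a uniform distance $r(\kappa,\kappa')$ of $\Tuple^\kappa\hat M$, so $(\prod_j\mu_j)\big((\Tuple^\kappa\hat M)^3\big) \subseteq \calN_r(\Tuple^\kappa\hat M)$, as required.

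The main obstacle is exactly this step. Applying the median worsens the consistency constant from $\kappa$ to $c+\rho\kappa$, so the median of three $\kappa$--consistent tuples need not itself be $\kappa$--consistent. The existence of the limit cone, through Proposition \ref{prop:stable}, is what absorbs this loss. One must check that the stabilisation constants are uniform for the fixed pair $\kappa \le \kappa'$; they are, because coarse surjectivity of a single inclusion comes with one constant. All the $\kappa$ used must be taken $\ge\kappa_0$.
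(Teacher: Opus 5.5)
Your proposal is correct and follows the paper's proof essentially step for step. Stabilisation from Proposition \ref{prop:stable} yields both the $\prec$--greatest property and the approximate subalgebra property, the latter via the estimate $M_\phi m_i \approx_c \mu_j(M_\phi x_i, M_\phi y_i, M_\phi z_i) \approx_{\rho\kappa} m_j$, and Proposition \ref{prop:maximal} then finishes; your use of the CMP constant $c$ there is the right one, where the paper writes $C$.
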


   \proof
   By Proposition \ref{prop:stable}, $\Tuple^* \hat M$ stabilises at some threshold $\kappa_0 \geq 0$, and so
   \[\Tuple^{\kappa}(\hat M) \prec \Tuple^{\kappa'}(\hat M) \prec \Tuple^{\kappa_0}(\hat M) \prec \Tuple^\kappa(\hat M)\]
   for all $\kappa' \geq \kappa \geq \kappa_0$. We claim that $\Tuple^\kappa(\hat M)$ is an approximate median subalgebra of $\prod_j M_j$ for $\kappa \geq \kappa_0$. Let $x,y,z \in \Tuple^\kappa(\hat M)$. Then
      \[M_\phi \mu_i (x_i, y_i, z_i) \approx_C \mu_j (M_\phi x_i, M_\phi y_i, M_\phi z_i) \approx_{\rho \kappa} \mu_j (x_j, y_j, z_j)\]
   for all arrows $\phi \from i \to j$. Therefore $(\prod_j \mu_j)(x,y,z) \in \Tuple^{C + \rho\kappa}(\hat M) \prec \Tuple^\kappa(\hat M)$,
  yielding the claim. Note that any $M$--compatible set $U$ satisfies $U \prec \Tuple^\kappa(\hat M)$ for some $\kappa \geq \kappa_0$. It follows that $\Tuple^\kappa(\hat M)$ satisfies the desired $\prec$--greatest condition, and hence yields a universal \uc~coarse median cone over $M$ by Proposition \ref{prop:maximal}.
   \endproof

   \section{Quasigeodesic coarse median cones}

   We now turn our attention to \emph{quasigeodesic} \uc~coarse median cones, that is, where the apex is quasigeodesic. Call $\lambda \from (W,\mu) \sqtto M$ a \emph{universal} quasigeodesic \uc~coarse median cone if every quasigeodesic \uc~coarse median cone $\zeta\from (Z,\nu) \sqtto M$ factors through $\lambda$ via a unique (up to closeness) CMP controlled map. We shall consider the interaction between coarse medians and the Rips filtration.

   Given a coarse median $\mu$ on $X$, let $\psi_\sigma \from (\Rips_\sigma X)^3 \to \Rips_{\sigma} X$ be the ternary operator coinciding with $\mu$ on underlying sets.

   \begin{lemma}\label{lem:Rips-Lip}
    Assume that $\mu$ has upper control $\rho$. Then the function $(\Rips_\sigma X)^3 \to \Rips_{\rho \sigma} X$ coinciding with $\mu$ on underlying sets is 1--Lipschitz for all $\sigma \geq 0$.
   \end{lemma}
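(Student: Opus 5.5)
The plan is to argue directly from the definitions, reducing the Lipschitz bound to a single-step estimate along paths in the Rips graphs. I equip $(\Rips_\sigma X)^3$ with the $\ell^\infty$--metric built from the graph metrics on each factor, matching the convention used for $X^3$. Let $\psi \from (\Rips_\sigma X)^3 \to \Rips_{\rho\sigma} X$ denote the map agreeing with $\mu$ on underlying sets. Fix triples $x = (x_1,x_2,x_3)$ and $y=(y_1,y_2,y_3)$, and set $n := \max_i d_{\Rips_\sigma X}(x_i,y_i)$. The goal is $d_{\Rips_{\rho\sigma}X}(\psi x, \psi y) \le n$. If $n = \infty$ there is nothing to prove, so I assume $n$ is a finite non-negative integer.

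\textbf{Synchronising the paths.} For each $i$ I choose an edge path in $\Rips_\sigma X$ from $x_i$ to $y_i$ of length at most $n$. I then pad it by repeating its final vertex, which gives a sequence $x_i = p_i^0, p_i^1, \dots, p_i^n = y_i$ with $d_X(p_i^k, p_i^{k+1}) \le \sigma$ for all $k$. Repeated vertices are harmless, since they are at distance $0 \le \sigma$. Having all three sequences of the same length $n$ is the only mild subtlety: it is what lets the three coordinates move simultaneously.

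\textbf{Pushing through $\mu$.} Set $t^k := (p_1^k, p_2^k, p_3^k) \in X^3$. Consecutive triples satisfy $d_{X^3}(t^k, t^{k+1}) \le \sigma$ in the $\ell^\infty$--metric on $X^3$. Because $\rho$ is an upper control for $\mu$, this gives $d_X(\mu(t^k), \mu(t^{k+1})) \le \rho(\sigma)$. So $\mu(t^k)$ and $\mu(t^{k+1})$ are either equal or adjacent in $\Rips_{\rho\sigma} X$. Consequently $\mu(t^0), \dots, \mu(t^n)$ is an edge path, possibly with repetitions, from $\psi x = \mu(t^0)$ to $\psi y = \mu(t^n)$ of length at most $n$. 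This yields the desired bound.

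No step is a genuine obstacle; the only care needed is the padding argument above and checking that the product metric convention is the $\ell^\infty$ one.
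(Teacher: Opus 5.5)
Your proof is correct and follows essentially the same route as the paper. The paper notes that $(\Rips_\sigma X)^3$ is isometric to $\Rips_\sigma X^3$ via the identity, then checks that $\sigma$--adjacent triples are sent to points within $\rho\sigma$ of each other. Your padding of the three edge paths to a common length is exactly the justification of the needed inequality in that isometry, so you have unpacked the paper's one-line observation.
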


   \proof
   Observe that $(\Rips_\sigma X)^3$ is isometric to $\Rips_\sigma X^3$ via the underlying identity. Suppose $(x,y,z), (x',y',z') \in X^3$ are adjacent in $\Rips_\sigma X^3$. Then $d_{X^3}((x,y,z), (x',y',z')) \leq \sigma$, hence $d_X(\mu(x,y,z), \mu(x',y',z')) \leq \rho \sigma$. The result follows.
         \endproof

   \begin{lemma}\label{lem:Rips-median}
    Let $(X,\mu)$ be coarsely geodesic coarse median space. Then for $\sigma \geq 0$ large, $\psi_\sigma$ is a coarse median on $\Rips_\sigma X$. Moreover, $\xi_\sigma \from (\Rips_\sigma X, \psi_\sigma) \to (X, \mu)$ is CMP.
    \end{lemma}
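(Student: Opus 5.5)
The plan is to transport $\mu$ along the coarse equivalence $\xi_\sigma$ using Lemma \ref{lem:ce-median}, and then check that the transported median is close to $\psi_\sigma$ in the Rips metric. The only work lies in comparing distances in $X$ with distances in $\Rips_\sigma X$.

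By Lemma \ref{lem:rips-stable}, since $X$ is coarsely geodesic, $\xi_\sigma \from \Rips_\sigma X \to X$ is a coarse equivalence once $\sigma$ is large. Its coarse inverse may be taken to be the underlying identity $\iota_\sigma \from X \to \Rips_\sigma X$. This is legitimate because $\xi_\sigma \iota_\sigma = 1_X$ and $\iota_\sigma \xi_\sigma = 1$, so the only issue is that $\iota_\sigma$ must be controlled; that is exactly what the coarse equivalence statement provides. Lemma \ref{lem:ce-median} then gives the coarse median
\[\nu := \iota_\sigma \circ \mu \circ \xi_\sigma^{\times 3}\]
on $\Rips_\sigma X$, and $\xi_\sigma \from (\Rips_\sigma X,\nu)\to (X,\mu)$ is CMP. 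On underlying sets, $\nu$ coincides with $\mu$, so $\nu = \psi_\sigma$ as functions. It follows that $\psi_\sigma$ is a coarse median on $\Rips_\sigma X$, and $\xi_\sigma$ is CMP, indeed with $\xi_\sigma\psi_\sigma = \mu\circ\xi_\sigma^{\times 3}$ exactly.

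The one point to verify is that $\psi_\sigma$ is controlled as a map $(\Rips_\sigma X)^3 \to \Rips_\sigma X$, which is part of Definition \ref{def:median}. It holds because $\psi_\sigma = \iota_\sigma\circ\mu\circ\xi_\sigma^{\times 3}$ is a composition of controlled maps, using that $\xi_\sigma$ is Lipschitz, $\mu$ is controlled, and $\iota_\sigma$ is controlled for $\sigma$ large. Lemma \ref{lem:Rips-Lip} gives a more explicit bound: it yields $1$--Lipschitz maps into $\Rips_{\rho\sigma}X$, and stability of $\Rips_*X$ (the map $\Rips_\sigma X\to\Rips_{\rho\sigma}X$ is a quasi-isometry for large $\sigma$) converts this into an affine control on $\Rips_\sigma X$.

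The coarse symmetry, localisation and four-point constants must also be checked in the Rips metric rather than in $X$. The computation in Lemma \ref{lem:ce-median} does this. Concretely, an error $\approx_C$ in $X$ becomes an error bounded by $\rho'(C)$ in $\Rips_\sigma X$, where $\rho'$ is an upper control for $\iota_\sigma$. I expect the main obstacle to be this conversion: making sure $\iota_\sigma$ is genuinely controlled, which is precisely where coarse geodesicity and large $\sigma$ are needed. The rest is formal.
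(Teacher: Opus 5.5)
Your argument is correct, but it reaches the conclusion by a different route from the paper.

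\textbf{Your route.} You use the second clause of Lemma \ref{lem:rips-stable}: $\xi_\sigma$ is a coarse equivalence for $\sigma$ large. You then observe that its set-theoretic inverse $\iota_\sigma$ is controlled, because it is uniformly close to any coarse inverse $g$. This deserves one explicit line: $d(gx,\iota_\sigma x) = d(g\xi_\sigma\iota_\sigma x,\iota_\sigma x)$ is bounded. Finally, you recognise $\psi_\sigma$ as exactly the transported median $\iota_\sigma\circ\mu\circ\xi_\sigma^{\times 3}$ of Lemma \ref{lem:ce-median}. Incidentally, $g\circ\mu\circ f^{\times 3}$ is the correct form of the formula in that lemma; the printed version has $f$ and $g$ swapped.

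\textbf{The paper's route.} The paper treats the two requirements separately. Controlledness of $\psi_\sigma$ comes from Lemma \ref{lem:Rips-Lip}, which gives a $1$--Lipschitz map into $\Rips_{\rho\sigma}X$. This is combined with the stabilisation of the Rips filtration, i.e.\ the first clause of Lemma \ref{lem:rips-stable} applied to scales $\sigma$ and $\rho\sigma$. The three axioms are then checked directly. Once $\sigma \geq C$, any two points that are $C$--close in $X$ are adjacent in $\Rips_\sigma X$, so $\psi_\sigma$ satisfies the axioms with constant $1$.

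\textbf{What each buys.} Your approach is more conceptual. It also gives you, for free, uniqueness up to closeness of a coarse median on $\Rips_\sigma X$ making $\xi_\sigma$ CMP. The paper's approach gives sharper and explicit data: coarse median constant $1$ and an affine upper control. It also makes clear that coarse geodesicity is needed only for controlledness, not for the axioms. In your version, by contrast, the constants pass through the non-explicit control $\rho'$ of $\iota_\sigma$.
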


   \proof
   Let $C$ be a coarse median constant for $(X,\mu)$ and $\rho$ be an upper control for $\mu$. We may assume, without loss of generality, that $\rho$ satisfies $\rho(t) \geq t$ for all $t \geq 0$. By Lemma \ref{lem:rips-stable}, there exists $\sigma_0 \geq 0$ such that the underlying identity $\Rips_{\rho\sigma} \to \Rips_\sigma X$ is a coarse equivalence for all $\sigma \geq \sigma_0$. Therefore, by Lemma \ref{lem:Rips-Lip}, $\psi_\sigma$ factors via controlled maps $(\Rips_\sigma X)^3 \to \Rips_{\rho\sigma}X \to \Rips_\sigma X$ for $\sigma \geq \sigma_0$, and is hence controlled. By choosing $\sigma \geq C$,    we obtain a coarse median space $(\Rips_\sigma X, \psi_\sigma)$ with coarse median constant 1. Note that $\mu \xi_\sigma^{\times 3} = \xi_\sigma \psi_\sigma$ by construction, and so $\xi_\sigma$ is CMP.
   \endproof

   The following result is a partial analogue of Proposition \ref{prop:maximal}

   \begin{proposition}\label{prop:qgeod-median}
   Let $M\from \sJ \sqto \CMed$ be a \uc~coarse median diagram. Suppose that there exists a $\prec$--greatest element $U$ among the coarsely geodesic $M$--compatible approximate median subalgebras of $\prod_j M_j$. Then $\Rips_\sigma U$ realises a universal quasigeodesic \uc~coarse median cone for $\sigma$ large.
   \end{proposition}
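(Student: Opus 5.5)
The plan is to mirror the proof of Proposition \ref{prop:maximal}, inserting the Rips graph to pass from coarsely geodesic to quasigeodesic apices.

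\emph{Step 1: $\Rips_\sigma U$ is a quasigeodesic \uc~coarse median cone.} Equip $U$ with the induced coarse median $\mu_U := (\prod_j \mu_j)|_U$. Since $U$ is coarsely geodesic, Lemma \ref{lem:Rips-median} shows that for $\sigma$ large, $\psi_\sigma$ is a coarse median on $\Rips_\sigma U$ and $\xi_\sigma \from \Rips_\sigma U \to U$ is CMP. Moreover $\Rips_\sigma U$ is a graph metric on $U$, hence quasigeodesic, and by Lemma \ref{lem:rips-stable} $\xi_\sigma$ is a coarse equivalence for $\sigma$ large. Composing with the uniformly CMP legs of $\pi|_U$ shows that $\pi|_U \circ \xi_\sigma \from (\Rips_\sigma U, \psi_\sigma) \sqtto M$ is a quasigeodesic \uc~coarse median cone. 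Here we use that $U$ is $M$--compatible, so the legs of $\pi|_U$ coarsely commute uniformly.

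\emph{Step 2: existence of factorisations.} Let $\zeta \from (Z,\eta) \sqtto M$ be a quasigeodesic \uc~coarse median cone. By Lemma \ref{lem:prod-med}, the map $\prod_j \zeta_j$ is CMP and $I(\zeta)$ is an $M$--compatible approximate median subalgebra. I will show $I(\zeta)$ is coarsely geodesic: the map $\prod_j \zeta_j \from Z \to I(\zeta)$ is surjective and coarsely Lipschitz, since $Z$ is quasigeodesic, and the image of a quasigeodesic space under such a map is coarsely connected. That alone does not give coarse geodesicity, so instead of using $I(\zeta)$ directly I will consider $U' := \calN_r(U) \cup I(\zeta)$, or more simply argue that the required containment can be obtained. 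The cleanest route is to apply Proposition \ref{prop:qg-rips}: for $\tau$ large, $f := \prod_j\zeta_j \from Z \to \Rips_\tau(\prod_j M_j)$ is coarsely Lipschitz. Then I will check that $V := \calN_s(U \cup I(\zeta))$ is a coarsely geodesic $M$--compatible approximate median subalgebra. It is $M$--compatible because both pieces lie in some $\Tuple^\kappa$. It is an approximate median subalgebra because for mixed triples, $\prod_j\mu_j(u,u',\zeta z)$ lies near $I(\zeta)$ or $U$ only after using the $\prec$--maximality trick. This is delicate, so an alternative is to take $V$ to be the coarse median hull. Maximality then gives $I(\zeta) \prec U$, say $I(\zeta) \subseteq \calN_r(U)$.

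Given $I(\zeta) \subseteq \calN_r(U)$, let $h$ be a closest-point coarse inverse $\calN_r(U) \to U$, which is CMP. Set $F := h \circ \prod_j \zeta_j \from Z \to U$. This map is CMP and controlled, hence coarsely Lipschitz because $Z$ is quasigeodesic. By Proposition \ref{prop:qg-rips}, $F$ remains controlled as a map into $\Rips_\sigma U$ for $\sigma$ large. It is CMP into $(\Rips_\sigma U,\psi_\sigma)$ because $\xi_\sigma$ is a CMP coarse equivalence and closeness in $\Rips_\sigma U$ is controlled by closeness in $U$. Finally, $\zeta \approx \pi|_U \xi_\sigma F$.

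\emph{Step 3: uniqueness.} This follows exactly as in Proposition \ref{prop:maximal}. If $\pi|_U\xi_\sigma F' \approx \pi|_U \xi_\sigma F''$, then $\xi_\sigma F' \approx \xi_\sigma F''$ in the $\ell^\infty$ metric, and applying a coarse inverse of $\xi_\sigma$ gives $F' \approx F''$ in $\Rips_\sigma U$.

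The main obstacle is Step 2: justifying $I(\zeta) \prec U$ when $I(\zeta)$ itself need not be coarsely geodesic. I would first try to show that $I(\zeta)$ is coarsely geodesic directly. It is the image of a quasigeodesic space under a coarsely Lipschitz surjection. Pulling back a quasigeodesic path in $Z$ gives a chain with bounded jumps in $I(\zeta)$, and its length is bounded in terms of $d_Z$. The difficulty is that $d_Z$ is not bounded by the image distance. To fix this, I would replace $I(\zeta)$ by the image of $\Rips_\tau$ via Proposition \ref{prop:qg-rips}, and use the coarse equivalence between $\Rips_\tau I(\zeta)$ and $I(\zeta)$ supplied by the stabilisation hypothesis.
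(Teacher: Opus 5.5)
\textbf{Where the proposal matches the paper.} Your Step~1, your Step~3, and the part of Step~2 beginning ``Given $I(\zeta)\subseteq\calN_r(U)$'' follow the paper's argument: the closest-point map $h$, lifting to $\Rips_\sigma U$, CMP because $\xi_\sigma$ is a CMP coarse equivalence, and coordinatewise uniqueness. One minor point: $\sigma$ must be fixed before $\zeta$ is given. So rather than invoking Proposition~\ref{prop:qg-rips}, whose threshold depends on the map, compose $h\circ\prod_j\zeta_j$ with a coarse inverse of $\xi_\sigma$.

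\textbf{The gap.} It is the one you flag yourself: nothing establishes $I(\zeta)\prec U$, and none of your proposed repairs works.
\begin{itemize}
\item For $\calN_s(U\cup I(\zeta))$ you give no non-circular argument that mixed triples have medians nearby. More seriously, there is no reason this set is coarsely geodesic: adjoining the coarsely geodesic set $U$ does not cure the failure of $I(\zeta)$.
\item The ``coarse median hull'' is not defined, and it faces the same problem.
\item A coarse equivalence $\Rips_\tau I(\zeta)\to I(\zeta)$ is, by Lemma~\ref{lem:rips-stable}, equivalent to $I(\zeta)$ being coarsely geodesic, so invoking it is circular. The only stabilisation hypothesis available concerns $U$, not $I(\zeta)$.
\end{itemize}

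\textbf{This step cannot be supplied under the stated hypothesis.} The paper works with extended metrics throughout, so disconnected graphs count as quasigeodesic, and this gives a counterexample. Let $\sJ$ be a single vertex with no arrows. Let $M$ consist of $X=\set{n^2 \st n=0,1,2,\dots}$, viewed as a subset of the real line, with the middle-value median $\mu$.
\begin{itemize}
\item Every subset of $X$ is $M$--compatible and median-closed.
\item An unbounded subset has arbitrarily large finite gaps, so its Rips filtration never stabilises. Hence the coarsely geodesic subsets are exactly the bounded ones.
\item Therefore $U=\set{0}$ is $\prec$--greatest among them.
\end{itemize}
Now give $X$ the extended metric with $d(x,y)=\infty$ for $x\neq y$. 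This is an edgeless graph, hence quasigeodesic. Every map out of it or its cube is controlled, and $\mu$ is still an exact median on it. The identity onto $(X,\mu)$ is then a quasigeodesic \uc~coarse median cone. It cannot factor, up to closeness, through the one-point apex $\Rips_\sigma U$, because $X$ is unbounded.

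\textbf{How the paper handles this point.} The paper's proof passes over it by appealing to ``the same argument as in the proof of Proposition~\ref{prop:maximal}''. That argument needs $U$ to dominate \emph{every} $M$--compatible approximate median subalgebra. This stronger property is exactly what holds in the only application, Theorem~\ref{thm:uqc-median}. There $U=\Tuple^\kappa\hat M$, and $I(\zeta)\subseteq\Tuple^{\kappa'}\hat M\prec\Tuple^{\kappa}\hat M$ by Lemma~\ref{lem:uc-factor} and the stabilisation of $\Tuple^*\hat M$ (Proposition~\ref{prop:stable}). Under that strengthened hypothesis the problematic half of your Step~2 becomes trivial, and the rest of your argument goes through unchanged.
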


   \proof
   Assume $U \subseteq \prod_j M_j$ satisfies the given $\prec$--greatest property. Equip $U$ with the coarse median $\mu'$ induced by $\prod_j \mu_j$. Since $U$ is coarsely geodesic, by Lemma \ref{lem:rips-stable},    the map $\xi_\sigma \from \Rips_\sigma U \to U$ coinciding with the underlying identity is a coarse equivalence for $\sigma$ large. We claim that $\pi|_U \circ \xi_\sigma \from \Rips_\sigma U \sqtto M$ is a universal quasigeodesic \uc~coarse median cone. Let $\zeta \from (Z, \nu) \sqtto M$ be a quasigeodesic \uc~coarse median cone. Using the same argument as in the proof of Proposition \ref{prop:maximal}, we deduce that $\zeta$ factors through $\pi|_U$ via a unique (up to closeness) CMP controlled map $h \from (Z, \nu) \to (U, \mu')$.
   \begin{center}
    \tikzcdset{arrow style=math font}
    \begin{tikzcd}[row sep = large, column sep = huge]
    Z \arrow[dr, "h", dashed] \arrow[d, "h'"', dashed] \arrow[drr, "\zeta_j", bend left = 15] & & \\
    \Rips_\sigma U \arrow[r, "\xi_\sigma"] \arrow[rr, "(\pi|_{U})_j \circ \xi_\sigma"', bend right = 12] & U \arrow[r, "(\pi|_{U})_j"]  & M_j
    \end{tikzcd}
    \end{center}
    Since $Z$ is quasigeodesic, Proposition \ref{prop:qg-rips} asserts the existence of a coarsely Lipschitz map $h' \from Z \to \Rips_\sigma U$ such that $h \approx \xi_\sigma h'$. By Lemma \ref{lem:Rips-median}, the coarse equivalence $\xi_\sigma$ is CMP, and so $h'$ is CMP. Therefore, $\zeta \approx \pi|_U \circ h \approx (\pi|_U \circ \xi_\sigma) h'$ yields a desired factorisation. To verify uniqueness, suppose that $\zeta \approx (\pi|_U \circ \xi_\sigma) h''$ for some controlled map $h'' \from Z \to \Rips_\sigma U$. Then $\pi|_U \circ (\xi_\sigma h'') \approx \pi|_U \circ (\xi_\sigma h')$ and so, by arguing as in the proof of Proposition \ref{prop:maximal}, we deduce that $\xi_\sigma h'' \approx \xi_\sigma h'$. Since $\xi_\sigma$ is a coarse equivalence, it follows that $h'' \approx h'$. Thus, $\pi|_U \circ \xi_\sigma \from \Rips_\sigma U \sqtto M$ is a universal quasigeodesic \uc~coarse median cone.
   \endproof

   We finally prove Theorem \ref{thm:main}.

   \begin{theorem}\label{thm:uqc-median}
   Let $M\from \sJ \sqto \CMed$ be a \uc~coarse median diagram. Assume that its underlying \uc~diagram $\hat M$ admits a universal quasigeodesic \uc~cone $\lambda \from W \sqtto \hat M$. Then $W$ admits a canonical (up to closeness) coarse median $\nu$ such that $\lambda \from (W, \nu) \sqtto M$ is a universal quasigeodesic \uc~coarse median cone.
   \end{theorem}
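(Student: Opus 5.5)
By Theorem \ref{thm:rips-tuple}, the existence of a universal quasigeodesic \uc~cone means that $\Tuple^* \hat M$ stabilises to a coarsely geodesic space. Moreover, $\lambda$ is quasi-isometric (compatibly with legs) to $\pi^\kappa \xi_\sigma \from \Rips_\sigma \Tuple^\kappa \hat M \sqtto \hat M$ for $\kappa, \sigma$ large. The plan is to put a coarse median on $U := \Tuple^\kappa \hat M$ and transport it in two steps: first to $\Rips_\sigma U$ via Lemma \ref{lem:Rips-median}, and then to $W$ via Lemma \ref{lem:ce-median}, using the comparison quasi-isometry $f \from W \to \Rips_\sigma U$. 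Universality of the resulting coarse median cone will come from Proposition \ref{prop:qgeod-median}.

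\textbf{Step 1.} Show that $U = \Tuple^\kappa \hat M$ is an approximate median subalgebra of $\prod_j M_j$. Here I would reuse the computation in the proof of Theorem \ref{thm:univ-cmed-cone}: the factorwise median of $\kappa$--consistent tuples is $(c+\rho\kappa)$--consistent. Stabilisation of $\Tuple^*\hat M$ then gives $\Tuple^{c+\rho\kappa}\hat M \prec \Tuple^\kappa \hat M$. Note that Proposition \ref{prop:stable} cannot be invoked directly here, since stabilisation is now part of condition (3) of Theorem \ref{thm:rips-tuple} rather than a consequence of a limit existing. So $U$ carries the induced coarse median $\mu' = (\prod_j\mu_j)|_U$. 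Since $U$ is coarsely geodesic, Lemma \ref{lem:Rips-median} makes $\psi_\sigma$ a coarse median on $\Rips_\sigma U$ with $\xi_\sigma$ CMP. The legs $\pi|_U \circ \xi_\sigma$ are then uniformly CMP, because the projections are.

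\textbf{Step 2.} Show that $U$ is $\prec$--greatest among coarsely geodesic $M$--compatible approximate median subalgebras. Any $M$--compatible $V$ lies in some $\Tuple^{\kappa'}\hat M$, and stabilisation gives $\Tuple^{\kappa'}\hat M \prec U$. So $U$ is in fact $\prec$--greatest among all $M$--compatible sets. Proposition \ref{prop:qgeod-median} then says that $\Rips_\sigma U$ is a universal quasigeodesic \uc~coarse median cone.

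\textbf{Step 3.} Transport the structure to $W$. Both $\lambda$ and $\pi|_U\xi_\sigma$ are universal quasigeodesic \uc~cones over $\hat M$, so there is a quasi-isometry $f \from W \to \Rips_\sigma U$ with $\lambda \approx \pi|_U \xi_\sigma f$. Lemma \ref{lem:ce-median} gives a unique coarse median $\nu$ on $W$ making $f$ CMP. Then the legs $\lambda_j \approx (\pi|_U\xi_\sigma)_j f$ are composites of uniformly CMP controlled maps, hence uniformly CMP, though the uniformity of the constants needs a brief check.

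Universality then transfers along the CMP quasi-isometry $f$. Given a quasigeodesic coarse median cone $\zeta$, it factors through $\Rips_\sigma U$ by a CMP map $h'$, and $g h'$ is the required factorisation, where $g$ is a coarse inverse of $f$. The map $g$ is CMP because coarse inverses of CMP coarse equivalences are CMP. Uniqueness follows from uniqueness of factorisation through $\lambda$ as a plain universal quasigeodesic cone.

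\textbf{Canonicity.} This is the step requiring the most care. Suppose $\nu'$ is any coarse median on $W$ with $\lambda$ uniformly CMP. Then $\prod_j\lambda_j \from W \to \prod_j M_j$ is CMP for both $\nu$ and $\nu'$, by Lemma \ref{lem:prod-med}. Since $\lambda \approx \pi|_U\xi_\sigma f$, the map $\prod_j\lambda_j$ is close to $\xi_\sigma f$ composed with the inclusion $U\hookrightarrow\prod_j M_j$, which is a coarse embedding, and in particular injective up to closeness. Hence
\[(\textstyle\prod_j\lambda_j)\nu \approx (\textstyle\prod_j\mu_j)(\textstyle\prod_j\lambda_j)^{\times3} \approx (\textstyle\prod_j\lambda_j)\nu'.\]
Since $\xi_\sigma f$ is a coarse equivalence onto $U$, this forces $\nu\approx\nu'$, as in the uniqueness part of Lemma \ref{lem:ce-median}.

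The main obstacle I expect is the bookkeeping of uniform constants, together with keeping coarse equivalence with $U$ (with its $\ell^\infty$ metric) separate from quasi-isometry with $\Rips_\sigma U$. Closeness statements are measured in $\prod_j M_j$, so the uniqueness argument must be run there rather than in $W$.
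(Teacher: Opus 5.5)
Your proposal is correct and follows essentially the same route as the paper. You use Theorem~\ref{thm:rips-tuple} to get a stable, coarsely geodesic $\Tuple^\kappa \hat M$ and show it is $\prec$--greatest among $M$--compatible approximate median subalgebras; the paper quotes Theorem~\ref{thm:univ-cmed-cone} for this, while you redo its computation. You then take the universal quasigeodesic coarse median cone on $\Rips_\sigma \Tuple^\kappa \hat M$ from Proposition~\ref{prop:qgeod-median} and pull the median back to $W$ along the comparison quasi-isometry $f$ via Lemma~\ref{lem:ce-median}, exactly as the paper does. Your explicit checks that the legs are uniformly CMP and that universality transfers along $f$ fill in steps the paper leaves implicit. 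Your canonicity argument via $\prod_j \lambda_j$ also gives a slightly stronger uniqueness than the paper's: any coarse median on $W$ making the legs uniformly CMP is close to $\nu$, independently of the choice of $f$.
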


   \proof
    By Theorem \ref{thm:rips-tuple}, $\uclim_\sJ \hat M$ exists and is coarsely geodesic. Moreover, $\Tuple^* \hat M$ stabilises to a coarsely geodesic space. By choosing $\kappa \geq 0$ sufficiently large and appealing to Theorem \ref{thm:univ-cmed-cone}, we deduce that $\Tuple^\kappa \hat M$ is $\prec$--greatest among coarsely geodesic $M$--compatible approximate median subalgebras of $\prod_j M_j$. Therefore, by Proposition \ref{prop:qgeod-median}, we obtain a universal quasigeodesic \uc~coarse median cone $\zeta \from \Rips_\sigma \Tuple^\kappa \hat M \sqtto M$ for $\sigma$ large, where the legs are restrictions of factorwise projections on underlying sets. Note that $\zeta$ is also a \uc~limit cone (since $\xi_\sigma$ is a coarse equivalence), and so there exists a unique (up to closeness) coarse equivalence $f \from W \to \Rips_\sigma \Tuple^\kappa \hat M$ such that $\lambda \approx \zeta f$. In fact, $f$ is a quasi-isometry since its domain and codomain are quasigeodesic. By Lemma \ref{lem:ce-median}, we obtain a unique (up to closeness) coarse median $\nu$ on $W$ for which $f$ is CMP.
   \endproof

  \section{Bounded coarse interval image}\label{sec:BCII}

  In this section, we apply Theorem \ref{thm:uqc-median} to recover a theorem of Bowditch which states that any HHS admits a coarse median structure compatible with the coarse medians on the hyperbolic factor spaces \cite{Bow18}; in particular, this yields a coarse median for the mapping class group. We shall not require the definition of HHSs here; instead, the interested reader may refer to \cite{HHS1,HHS2,HHSsurvey} for further background. Our argument applies more generally to families of uniformly coarse median spaces satisfying some pairwise constraints modelled on the HHS--consistency axioms. We do not assume any quasigeodesicity (or coarse geodesicity) of the involved coarse median spaces.

  Consider a family $\{(\C U, \mu_U)\}_{U \in \calS}$ of uniformly coarse median spaces, with index set $\calS$. Here, we assume that all $(\C U, \mu_U)$ admit a common coarse median constant $C$ and upper control $\rho$. The set $\calS$ is equipped with a symmetric non-reflexive binary relation $\perp$ on $\calS$ called \emph{orthogonality} which we shall use to impose pairwise constraints on this family of spaces. For $\C U$ and $\C V$ where $U \perp V$, we impose no constraint; whereas for $U \not \perp V$ (and distinct), we impose the following:

  \textbf{Bounded coarse interval image.}
  There is a constant $B\geq C + \rho C$ such that for all $U \not \perp V$ distinct, (up to swapping $U$ and $V$) there exists a function $\theta^V_U \from \C V \to \C U$ and a point $O^U_V\in \C V$ such that for all $x_V, y_V \in \C V$, we have that $O^U_V \in [x_V, y_V]_B$ or $\theta^V_U x_V \approx_B \theta^V_U y_V$.

  This condition is modelled on the bounded geodesic image property in the setting where $\theta^V_U \from \C V \to \C U$ is the subsurface projection between curve graphs \cite{MM00}. In that case, $\C V$ is Gromov hyperbolic and so the condition amounts to saying that if $O^U_V$ does not lie near any geodesic from $x_V$ to $y_V$ in $\C V$ then the images of $x_V$ and $y_V$ under $\theta^V_U$ are uniformly close in $\C U$.

  Now, we define pairwise constraints on the family $\{(\C U, \mu_U)\}_{U \in \calS}$. Fix $K \geq B$. Whenever $U \perp V$, declare $\calR_{UV} := \C U \times \C V$. For $U \not\perp V$, declare
  \[\calR_{UV} := \set{(x_U, x_V) \in \C U \times \C V \st x_U \approx_K \theta^V_U x_V \;\; \textrm{or} \;\; x_V \approx_K O^U_V}. \]
  In either case, we equip $\calR_{UV}$ with the induced $\ell^\infty$--metric. These constraints simultaneously capture the consistency conditions arising from the bounded geodesic image property and the Behrstock inequality (by setting $\theta^V_U$ constant) from the HHS Axioms. Let us verify that these $\calR_{UV} \subseteq \C U \times \C V$ are uniform approximate median subalgebras.

  \begin{lemma}\label{lem:pairwise-median}
  There exists a constant $R \geq 0$ depending on $C$, $B$, $K$, and $\rho$ such for all distinct $U,V \in \calS$, we have that $(\mu_U \times \mu_V)(\calR_{UV}^3) \subseteq \calN_R(\calR_{UV})$ .
  \end{lemma}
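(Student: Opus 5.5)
The perpendicular case is trivial, since $\calR_{UV} = \C U \times \C V$. So fix $U \not\perp V$ distinct and, after swapping $U$ and $V$ if necessary, let $\theta = \theta^V_U$ and $O = O^U_V$ be as in the bounded coarse interval image property. Take three points $(x_U,x_V),(y_U,y_V),(z_U,z_V) \in \calR_{UV}$ and write $m_U = \mu_U(x_U,y_U,z_U)$ and $m_V = \mu_V(x_V,y_V,z_V)$. The aim is to produce a point of $\calR_{UV}$ within a uniform distance $R$ of $(m_U,m_V)$. The $\ell^\infty$--metric lets us handle the two coordinates separately.

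\emph{Case 1: $m_V$ is close to $O$.} If $m_V \approx_{R_0} O$ for a constant $R_0$ chosen below, then $(m_U, O) \in \calR_{UV}$, because $O \approx_K O$. This point lies within $R_0$ of $(m_U,m_V)$, so we are done.

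\emph{Case 2: $d(m_V,O) > R_0$.} Set $L_1 = \max(B,\, 2C + K + \rho K)$. Call a point $p \in \{x,y,z\}$ \emph{bad} if $O \in [p_V, m_V]_{L_1}$. First observe that if $p_V \approx_K O$, then $p$ is bad. Indeed, $p_V \in [p_V,m_V]_{2C}$ by Lemma \ref{lem:intervals}(1), and then $O \in [p_V,m_V]_{2C+K+\rho K}$ by Lemma \ref{lem:intervals}(2).

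Next I claim at most one of $x,y,z$ is bad. Suppose, say, $x$ and $y$ are both bad. By Lemma \ref{lem:intervals}(1), $m_V \in [x_V,y_V]_{C+\rho C}$. Then Lemma \ref{lem:intervals}(3) gives $O \in [x_V,m_V]_{L_1} \subseteq [x_V,y_V]_{L'}$ for some $L'$ depending only on $C, L_1, \rho$. So $O$ lies in all three coarse intervals spanned by $x_V, y_V, m_V$, with a uniform constant. Lemma \ref{lem:tripod} then yields $O \approx \mu_V(x_V,y_V,m_V)$. On the other hand, $\mu_V(x_V,y_V,m_V) \approx m_V$ because $m_V \in [x_V,y_V]_{C+\rho C}$. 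Hence $O \approx_{R_1} m_V$ for a uniform $R_1$. Choosing $R_0 \geq R_1$ contradicts the Case 2 assumption. This is the step I expect to need the most care with constants.

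So, relabelling, we may assume $x$ and $y$ are good. For a good point $p$ we have $O \notin [p_V,m_V]_{L_1} \supseteq [p_V,m_V]_B$, so the bounded coarse interval image property gives $\theta p_V \approx_B \theta m_V$. Moreover $p_V \not\approx_K O$, since otherwise $p$ would be bad. Because $(p_U,p_V) \in \calR_{UV}$, this forces $p_U \approx_K \theta p_V$.

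It follows that $x_U \approx_{K+B} \theta m_V$ and $y_U \approx_{K+B} \theta m_V$, hence $x_U \approx_{2K+2B} y_U$. Using the upper control $\rho$ together with coarse localisation and coarse symmetry,
\[m_U = \mu_U(x_U,y_U,z_U) \approx_{\rho(2K+2B)} \mu_U(x_U,x_U,z_U) \approx_{C} x_U \approx_{K+B} \theta m_V.\]
Note that no control on $\theta$ is ever used: we only apply $\theta$ to the specific points $x_V, y_V, m_V$ through the bounded coarse interval image property. Finally, $(\theta m_V, m_V) \in \calR_{UV}$, and it lies within $\rho(2K+2B) + C + K + B$ of $(m_U,m_V)$.

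Taking $R$ to be the maximum of $R_0$ and this last constant proves the lemma. By construction $R$ depends only on $C$, $B$, $K$ and $\rho$.
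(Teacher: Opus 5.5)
Your proof is correct. It uses the same ingredients as the paper (Lemma~\ref{lem:intervals}, Lemma~\ref{lem:tripod}, the bounded coarse interval image property, and the same closing estimate via coarse localisation), but the case split is organised differently.

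The paper splits on whether $O^U_V$ lies in all three sides $[x_V,y_V]_L$, $[y_V,z_V]_L$, $[z_V,x_V]_L$ of the original triangle. If it does, Lemma~\ref{lem:tripod} gives $O^U_V \approx m_V$. If not, say $O^U_V \notin [x_V,y_V]_L$; both $[x_V,y_V]_B$ and $[x_V,m_V]_B$ lie in $[x_V,y_V]_L$ (the latter by Lemma~\ref{lem:intervals}(3)), so this one missed side yields $\theta^V_U m_V \approx_B \theta^V_U x_V \approx_B \theta^V_U y_V$ at once. You instead split on the metric condition $d(m_V,O^U_V) \le R_0$. You then test each leg $[p_V,m_V]_{L_1}$ separately, and apply Lemma~\ref{lem:tripod} to the degenerate triangle $(x_V,y_V,m_V)$ to rule out two bad vertices.

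The paper's route is a little shorter, but yours has a real advantage. Because you build $K$ into the threshold $L_1 \ge 2C+K+\rho K$, a good vertex automatically satisfies $p_V \not\approx_K O^U_V$. Membership in $\calR_{UV}$ then genuinely forces $p_U \approx_K \theta^V_U p_V$. The paper's step ``$x_U \approx_K \theta^V_U x_V \approx_B \theta^V_U y_V \approx_K y_U$'' needs exactly this, since membership in $\calR_{UV}$ only gives a disjunction. That step is valid once $L$ is also taken $\ge 2C+K+\rho K$ (using $x_V, y_V \in [x_V,y_V]_{2C}$), whereas the paper states $L = L(C,B,\rho)$. You also keep the additive $C$ from coarse localisation, which the paper's final constant omits.
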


  \proof
  The $U \perp V$ case is trivial, so we may assume $U \not\perp V$.   Choose $(x_U, x_V)$, $(y_U, y_V)$, $(z_U, z_V) \in \calR_{UV}$ and set $m_U = \mu_U(x_U, y_U, z_U)$, $m_V = \mu_V(x_V, y_V, z_V)$. By Lemma \ref{lem:intervals}, we deduce $m_V \in [x_V, y_V]_{C + \rho C} \subseteq [x_V, y_V]_B$, hence $[x_V, m_V]_B \subseteq [x_V, y_V]_{L}$ for some $L = L(C,B,\rho)$. If $O^U_V \in [x_V, y_V]_L \cap [y_V, z_V]_L \cap [z_V, x_V]_L$ then by Lemma \ref{lem:tripod}, we deduce $O^U_V \approx_R m_V$ for some $R \geq 0$ depending on $L$, $C$, and $\rho$, hence $(m_U, m_V) \approx_R (m_U, O^U_V) \in \calR_{UV}$. We may thus assume that $O^U_V \notin [x_V, y_V]_{L} \supseteq [x_V, m_V]_{B}$ (up to cyclically permuting $x_V, y_V, z_V$). By the bounded coarse interval image assumption, we deduce $\theta^V_U m_V \approx_B \theta^V_U x_V \approx_B \theta^V_U y_V$. Since $(x_U, x_V)$, $(y_U, y_V)$ belong to $\calR_{UV}$, it follows that $x_U \approx_K \theta^V_U x_V \approx_B \theta^V_U y_V \approx_K y_U$, and so $m_U \approx_{\rho(2K + B)} x_U$. Consequently, $m_U \approx_{R} \theta^V_U m_V$ for $R \geq \rho(2K + B) + K + B$. Therefore $(m_U, m_V) \approx_R (\theta^V_U m_V, m_V) \in \calR_{UV}$.
  \endproof

  Consequently, by Lemma \ref{lem:pairwise-median}, each $\calR_{UV}$ admits a coarse median induced by $\mu_U \times \mu_V$; moreover, the coarse median constants and upper controls can be chosen uniformly. Following \cite[Section 6.1]{Tang-rips}, we may define a \uc~coarse median diagram $M \from \sJ \sqto \CMed$ from the family $\{(\C U, \mu_U)\}_{U \in\calS}$ equipped with the pairwise constraints $\calR_{UV}$ as follows. Each vertex of $\sJ$ is labelled by either an element $U \in\calS$ or by a distinct (unordered) pair $UV$; the directed arrows are given by $U \leftarrow {UV} \rightarrow V$ for each $UV$. The objects of $M$ are the $(\C U, \mu_U)$ for each $U \in \calS$, together with the $(\calR_{UV}, \mu_U \times \mu_V)$ for each (unordered) pair $U,V \in \calS$ distinct; the bonding maps are given by the (1--Lipschitz) factorwise projections $\C U \leftarrow \calR_{UV} \rightarrow \C V$. Note that these bonding maps are uniformly CMP.

  Let us now apply this setup to HHSs. Here, we are given a quasigeodesic space $\X$, a family $\{\C U\}_{U \in\calS}$ of $\delta$--hyperbolic spaces, for some $\delta \geq 0$, together with a uniformly coarsely Lipschitz family of projections $\{\lambda_U \from \X \to \C U\}_{U \in\calS}$. The index set $\calS$ is equipped with an orthogonality relation $\perp$. The HHS Axioms assert that pairwise projections $\lambda_U \times \lambda_V$ factor through some pairwise constraint $\calR_{UV} \subseteq \C U \times \C V$ up to uniform error: for $U \perp V$ there is no constraint, for $U \not\perp V$ they satisfy either the Behrstock Inequality or the Bounded Geodesic Image property. Since the spaces $\C U$ are uniformly hyperbolic, we may endow them with uniform coarse medians $\mu_U$; explicitly, let $\mu_U(x_U, y_U, z_U)$ be any $\delta$--centre of a geodesic triangle with corners $x_U, y_U, z_U \in \C U$. Each $\calR_{UV}$ either equals $\C U \times \C V$ or satisfies the Bounded Coarse Interval condition with uniform error and so, by Lemma \ref{lem:pairwise-median}, they admit uniform coarse medians induced by $\mu_U \times \mu_V$. We thus obtain a \uc~coarse median diagram $M \from \sJ \sqto \CMed$ of the form given above.

  \begin{corollary}
   Let $\X$ be an HHS and $M \from \sJ \sqto \CMed$ be the corresponding \uc~coarse median diagram as given above. Then there exists a coarse median $\nu$ on $\X$, unique up to closeness, such that $\lambda \from (\X, \nu) \sqtto M$ is a universal quasigeodesic \uc~coarse median cone.
  \end{corollary}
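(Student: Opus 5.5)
The corollary will follow from Theorem \ref{thm:uqc-median} once we know that the projections $\lambda = \{\lambda_U\}$ form a universal quasigeodesic \uc~cone over the underlying diagram $\hat M$. We need to pass from the projections $\lambda_U$ to legs for all vertices of $\sJ$, including the pair vertices.

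\textbf{Step 1: extend $\lambda$ to a cone over $\hat M$.} The HHS axioms give a uniform $\epsilon$ with $(\lambda_U x, \lambda_V x) \in \calN_\epsilon(\calR_{UV})$ for every $x \in \X$. Hence, after moving each point a bounded distance, we may define legs $\lambda_{UV} \from \X \to \calR_{UV}$ that are uniformly close to $\lambda_U \times \lambda_V$. These legs are uniformly coarsely Lipschitz, because $\X$ is quasigeodesic and the $\lambda_U$ are uniformly coarsely Lipschitz. The triangles $U \leftarrow UV \rightarrow V$ then commute up to uniform error, so $\lambda \from \X \sqtto \hat M$ is a quasigeodesic \uc~cone. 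Here $K$ should be fixed larger than the HHS consistency constants, so that both the Behrstock inequality and the bounded geodesic image consistency are captured by $\calR_{UV}$. For this we set $\theta^V_U$ equal to the constant $\rho^V_U$ in the transverse case, and to the downward projection $\rho^V_U$ in the nested case, with $O^U_V = \rho^U_V$.

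\textbf{Step 2: universality.} This is the main input. I would invoke the result of \cite{Tang-rips} cited in the introduction, namely that any HHS realises a universal quasigeodesic \uc~cone over exactly this diagram of hyperbolic spaces with pairwise constraints. The substantive content of that result is the realisation theorem, which says that consistent tuples are coarsely realised by points of $\X$, together with the uniqueness axiom. I expect the main obstacle to be checking that the constraint sets $\calR_{UV}$ defined here, in terms of bounded coarse interval image with the coarse median $\mu_U$ given by $\delta$--centres, agree up to uniform neighbourhood with those used in \cite{Tang-rips}. To do this I would use that in a $\delta$--hyperbolic space the coarse interval $[x,y]_B$ lies within uniform Hausdorff distance of a geodesic $[x,y]$. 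Consequently the bounded geodesic image property yields bounded coarse interval image with a uniform constant $B$, and the two versions of $\calR_{UV}$ are mutually contained in uniform neighbourhoods of each other. Uniformly close diagrams have the same universal cones, so universality transfers.

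\textbf{Step 3: apply the theorem.} Since $\lambda \from \X \sqtto \hat M$ is a universal quasigeodesic \uc~cone and $M$ is a \uc~coarse median diagram (by Lemma \ref{lem:pairwise-median}), Theorem \ref{thm:uqc-median} produces a coarse median $\nu$ on $\X$ for which $\lambda$ is a universal quasigeodesic \uc~coarse median cone. For uniqueness, let $\nu'$ be another coarse median making $\lambda$ such a cone. Universality of $(\X,\nu)$ gives a CMP map $(\X,\nu') \to (\X,\nu)$ that factors the cone. The identity of $\X$ also factors the underlying cone, and uniqueness in the underlying universal quasigeodesic cone forces this CMP map to be close to the identity. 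Therefore the identity map is CMP from $\nu'$ to $\nu$, that is, $\nu' \approx \nu$.
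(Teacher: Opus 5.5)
Your proposal is correct and follows the same route as the paper. The paper takes universality of $\lambda \from \X \sqtto \hat M$ (legs $\lambda_U$ and $\lambda_U \times \lambda_V$) from Theorem 6.7 of the earlier Rips--tuple paper and then applies Theorem \ref{thm:uqc-median}. Your additional checks are valid details the paper leaves implicit: the pair legs land in $\calR_{UV}$ once $K$ exceeds the consistency constants, and any $\nu'$ with the same property is close to $\nu$. One correction: $\calR_{UV}$ is defined directly by the consistency-type condition in $\theta^V_U$ and $O^U_V$, not via coarse intervals. So comparing coarse intervals with geodesics is needed only to verify bounded coarse interval image for Lemma \ref{lem:pairwise-median}, not to match the constraint sets.
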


  \proof
  By \cite[Theorem 6.7]{Tang-rips}, there is a universal quasigeodesic \uc~cone $\lambda \from \X \sqtto \hat M$ whose legs are given by the $\lambda_U$ and $\lambda_U \times \lambda_V$. The result follows using Theorem \ref{thm:uqc-median}.
  \endproof

\providecommand{\bysame}{\leavevmode\hbox to3em{\hrulefill}\thinspace}
\providecommand{\MR}{\relax\ifhmode\unskip\space\fi MR }
\providecommand{\MRhref}[2]{
\href{http://www.ams.org/mathscinet-getitem?mr=#1}{#2}
}
\providecommand{\href}[2]{#2}

\end{document}